\DeclareMathOperator{\Ad}{Ad}
\DeclareMathOperator{\Aut}{Aut}
\DeclareMathOperator{\Span}{span}
\newcommand{\fr}{\mathfrak}
\newcommand{\op}{\operatorname}
\newcommand{\al}{\alpha}
\newcommand{\bb}{\mathbb}
\DeclareMathOperator{\SO}{SO}
\DeclareMathOperator{\OO}{O}
\DeclareMathOperator{\Sp}{Sp}
\DeclareMathOperator{\U}{U}
 \newtheorem{lemma} {Lemma} [section]
\newtheorem{theorem}[lemma]{Theorem} 
\newtheorem{remark}[lemma] {Remark} 
\newtheorem{prop} [lemma]{Proposition}  
\newtheorem{definition}[lemma] {Definition}
\begin{document}

\title[Geodesic orbit metrics in a class of homogeneous bundles]{Geodesic orbit metrics in a class of homogeneous bundles over quaternionic Stiefel manifolds}

\author{Andreas Arvanitoyeorgos, Nikolaos Panagiotis Souris and Marina Statha}
\address{University of Patras, Department of Mathematics, GR-26500 Rion, Greece}
\email{arvanito@math.upatras.gr}
\address{University of Patras, Department of Mathematics, GR-26500 Rion, Greece}
\email{nsouris@upatras.gr  }
\address{University of Patras, Department of Mathematics, GR-26500 Rion, Greece}
\email{statha@math.upatras.gr} 
\medskip

\begin{abstract}  Geodesic orbit spaces (or g.o. spaces) are defined as those homogeneous Riemannian spaces $(M=G/H,g)$ whose geodesics are orbits of one-parameter subgroups of $G$.  The corresponding metric $g$ is called a geodesic orbit metric.  We study the geodesic orbit spaces of the form $(\Sp(n)/\Sp(n_1)\times \cdots \times \Sp(n_s), g)$,  with $0<n_1+\cdots +n_s\leq n$. Such spaces include spheres, quaternionic Stiefel manifolds, Grassmann manifolds and quaternionic flag manifolds.  The present work is a contribution to the study of g.o. spaces $(G/H,g)$ with $H$ semisimple.

\medskip
\noindent 2020 {\it Mathematics Subject Classification.} Primary 53C25; Secondary  53C30.

\medskip
\noindent {\it Keywords}: Homogeneous geodesic; geodesic vector; geodesic orbit space; isotropy representation;  generalized quaternionic  flag manifold; quaternionic Stiefel manifold
\end{abstract}

\maketitle
 

\section{Introduction}
\markboth{Andreas Arvanitoyeorgos Nikolaos Panagiotis Souris and Marina Statha}{Invariant geodesic orbit metrics on real flag manifolds}

Geodesic orbit spaces $(M=G/H,g)$ are defined by the simple property that any geodesic $\gamma$ has the form 
\begin{equation*}\gamma(t)=\exp(tX)\cdot o,\end{equation*}
 where $\exp$ is the exponential map on $G$, $o=\gamma(0)$ is a point in $M$ and $\cdot$ denotes the action of $G$ on $M$.  These spaces were initially considered in \cite{Ko-Va} and up to today they have been extensively studied within various geometric contexts, including the Riemannian (\cite{GoNi}), pseudo-Riemannian (\cite{CaZa}), Finsler (\cite{YaDe}) and affine (\cite{Du}) context.  The classification of g.o. spaces remains an open problem.
 
 There are diverse examples of g.o. spaces, including the classes of symmetric spaces, weakly symmetric spaces (\cite{Ber-Ko-Va}, \cite{Wo2}), isotropy irreducible spaces (\cite{Wo1}), $\delta$-homogeneous spaces (\cite{Be-Ni-1}), Clifford-Wolf homogeneous spaces (\cite{Be-Ni-2}) and  \emph{naturally reductive spaces}.
 Reviews about g.o. spaces up to 2017 can be found in \cite{Ar2} and in the introduction of \cite{Ni2}. Various results up to 2020 are included in the recently published book \cite{Be-Ni-3}.
 
 Determining the g.o. metrics among the $G$-invariant metrics on a space $G/H$ presents some challenges.  The main challenge lies in the fact that the space of $G$-invariant metrics may have complicated structure, depending on whether the \emph{isotropy representation} of $H$ on the tangent space $T_{o}(G/H)$ contains pairwise equivalent submodules. To remedy this obstruction, various simplification results for g.o. metrics have been established (e.g. \cite{Ni2}, \cite{So1}).  A general observation is that the existence and the form of the g.o. metrics on $G/H$ depends to a large extent on the structure of the tangent space $T_{o}(G/H)$ induced from the isotropy representation and on the Lie algebraic relations between the corresponding submodules (e.g. \cite{CheNiNi}). 
 
 When $G$ is compact semisimple, the classification of the g.o. spaces $(G/H,g)$ with $H$ abelian and $H$ simple has been obtained in the works \cite{So2} and \cite{CheNiNi} respectively. On the other hand, the classification of compact g.o. spaces $(G/H,g)$ with $H$ semisimple remains open, while no general results are known for this case.  
The present paper is a continuation of our  work \cite{ArSoSt} 
 towards a study of g.o. metrics on a general family of spaces $G/H$, such that the isotropy representation of all of its members has a similar description.
 In that work we studied the geodesic orbit metrics on the spaces $\SO(n)/\SO(n_1)\times \cdots \times \SO(n_s)$ and $\U(n)/\U(n_1)\times \cdots \times \U(n_s)$ with $0<n_1+\cdots +n_s\leq n$.
These are spaces $G/H$ where $G$ is a compact classical Lie group and $H$ is a diagonally embedded product of Lie groups of the same type as $G$. 

In the present paper we study the geodesic orbit metrics on the spaces
 $\Sp(n)/\Sp(n_1)\times \cdots \times \Sp(n_s)$  with $0<n_1+\cdots +n_s\leq n$. These spaces properly include the spheres $\mathbb{S}^{4n-1}=\Sp(n)/\Sp(n-1)$, the quaternionic Stiefel manifolds $\Sp(n)/\Sp(n-k)$, the Grassmann manifolds $\Sp(n)/\Sp(k)\times \Sp(n-k)$,  and the  quaternionic  flag manifolds $\Sp(n)/\Sp(n_1)\times \cdots \times \Sp(n_s)$, $n_1+\cdots +n_s=n$ (\cite{Ra}). 
 If $n_1+\cdots +n_s< n$, each of these spaces can be viewed as a total space over a quaternionic Stiefel manifold, with fiber a quaternionic  flag manifold, i.e.
\begin{equation*}
 \Sp(m)/\Sp(n_1)\times \cdots \times \Sp(n_s)\rightarrow \Sp(n)/\Sp(n_1)\times \cdots \times \Sp(n_s)\rightarrow \Sp(n)/\Sp(m),
\end{equation*}
 with $m=n_1+\cdots +n_s$.

Our main result is the following.

\begin{theorem}\label{main2}
Let $G/H$  be the space $Sp(n)/Sp(n_1)\times \cdots \times Sp(n_s)$, where $0<n_1+\cdots +n_s\leq n$.  If $G/H\neq Sp(n)/Sp(n-1)$ (i.e. if $n-(n_1+\cdots+n_s)\neq 1$ or $s>1$) then a $G$-invariant Riemannian metric on $G/H$ is geodesic orbit if and only if it is the standard metric induced from the Killing form on the Lie algebra $\fr{sp}(n)$ of $Sp(n)$.  

If $G/H=Sp(n)/Sp(n-1)$ (i.e. $s=1$ and $n-n_1=1$) then a $G$-invariant metric $g$ on $G/H$ is geodesic orbit if and only if $g=g_{\mu}$, $\mu>0$, where $g_{\mu}$ denotes a one-parameter family of deformations of the standard metric $g_1$, along the fiber $Sp(1)$ of the fibration $Sp(n)/Sp(n-1)\rightarrow Sp(n)/Sp(1)\times Sp(n-1)$.
\end{theorem}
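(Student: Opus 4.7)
The plan is to decompose the tangent space into isotropy submodules, parametrize the $G$-invariant metrics by scalars on each submodule (after reducing by $N_G(H)/H$), and then feed sum-of-roots-type test vectors into the geodesic vector criterion to force all scalars to coincide outside the sphere case. The easy direction, that the standard metric is g.o., follows at once from its natural reductivity.

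I would realize $\fr{sp}(n)$ as quaternionic anti-Hermitian matrices and use the block decomposition induced by the partition $n = n_1 + \cdots + n_s + (n-m)$, with $m := n_1 + \cdots + n_s$. Setting $\fr{m}_{ij}$ for $1 \leq i < j \leq s$ to be the off-diagonal $(i,j)$-block, $\fr{m}_{i0}$ the $(i,0)$-block, and $\fr{m}_{00} = \fr{sp}(n-m)$ the trailing diagonal block (present only when $m < n$), one obtains an $\Ad(H)$-invariant decomposition $\fr{m} = \bigoplus_{i<j}\fr{m}_{ij} \oplus \bigoplus_i \fr{m}_{i0} \oplus \fr{m}_{00}$. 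Distinct $\fr{m}_{ij}$ are pairwise $H$-inequivalent, so on each the invariant inner product equals $x_{ij}\langle\cdot,\cdot\rangle$; on $\fr{m}_{i0}$, which is $H$-isotypic of multiplicity $n-m$, it is parametrized by a positive-definite quaternionic Hermitian $(n-m)\times(n-m)$ matrix; on $\fr{m}_{00}$ it is arbitrary. Using the $\Sp(n-m)$ factor of $N_G(H)$ (acting by right multiplication on each $\fr{m}_{i0}$ and by the adjoint on $\fr{m}_{00}$) one may reduce to a normal form with real positive diagonal on each $\fr{m}_{i0}$ and $x_0$ times the Killing form on $\fr{m}_{00}$.

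For the converse direction I would apply the geodesic vector criterion: $X \in \fr{m}$ is geodesic iff there exists $a \in \fr{h}$ with $\langle [a+X, Y]_{\fr{m}}, X\rangle = 0$ for all $Y \in \fr{m}$. Choosing test vectors $X = X_{ij} + X_{jk}$ and exploiting $[\fr{m}_{ij}, \fr{m}_{jk}] \subset \fr{m}_{ik}$ should yield $x_{ij} = x_{ik}$, and similar equations follow from $[\fr{m}_{i0}, \fr{m}_{j0}] \subset \fr{m}_{ij}$ and $[\fr{m}_{ij}, \fr{m}_{j0}] \subset \fr{m}_{i0}$. When $n-m \geq 2$, the nontrivial action $[\fr{m}_{00}, \fr{m}_{i0}] \subset \fr{m}_{i0}$ should additionally force the diagonal on $\fr{m}_{i0}$ to be scalar and to match $x_0$. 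Chaining all such constraints shows that whenever $s \geq 2$ or $n - m \geq 2$, every parameter is pinned to a common value, so $g$ is the standard metric.

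In the remaining case $\Sp(n)/\Sp(n-1)$ we have $\fr{m} = \fr{m}_{10} \oplus \fr{m}_{00}$ with $\fr{m}_{10}$ a single quaternionic column and $\fr{m}_{00} = \fr{sp}(1)$; only the identity $[\fr{m}_{10}, \fr{m}_{10}] \subset \fr{sp}(n-1) \oplus \fr{sp}(1)$ is available, and there is no three-module chain linking $x_{10}$ to $x_0$, so both parameters remain free. I would then verify that every admissible two-parameter metric (a one-parameter family $g_\mu$ up to overall scaling) is geodesic orbit, either by realizing it as a naturally reductive metric in the classical Berger-sphere construction or by a direct geodesic-vector calculation on the explicit quaternionic model. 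The hardest step is the treatment of $\fr{m}_{i0}$ when $n-m \geq 2$: this isotypic summand admits a genuinely multiparameter family of invariant metrics, and forcing it to be scalar requires combining the normalizer reduction with careful g.o. calculations using test vectors that mix $\fr{m}_{i0}$ with $\fr{m}_{00}$. The remaining bookkeeping is routine but demands care to ensure that every parameter is captured by some bracket identity.
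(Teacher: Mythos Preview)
Your overall strategy---decompose $\fr m$, use the normalizer to simplify, then chain bracket relations $[\fr m_{ij},\fr m_{jk}]\subset\fr m_{ik}$ via the geodesic-vector criterion---parallels the paper's. But there are two genuine gaps.

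First, your normal-form reduction is wrong as stated. Conjugation by $\Sp(n-m)\subset N_G(H)$ does \emph{not} bring an arbitrary inner product on $\fr m_{00}=\fr{sp}(n-m)$ to a scalar multiple of the Killing form when $n-m\ge 2$: the $\Ad(\Sp(n-m))$-action on inner products of $\fr{sp}(n-m)$ has orbits of positive codimension, so a single scalar $x_0$ cannot parametrize the result. Likewise, a single $\Sp(n-m)$-conjugation cannot simultaneously diagonalize the Hermitian matrices $P_1,\dots,P_s$ governing the metric on $\fr m_{10},\dots,\fr m_{s0}$ unless they already commute. The paper avoids both issues by using the normalizer lemma in its strong form: a g.o.\ metric is automatically $\Ad(N_G(H^0))$-\emph{invariant} (not merely conjugate to something nice), so each $\fr m_{0j}$ is $N_G(H^0)$-irreducible (forcing $A|_{\fr m_{0j}}$ to be scalar outright), and $A|_{\fr m_{00}}$ is bi-invariant on $\Sp(n-m)$, hence a multiple of the Killing form by simplicity. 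You should replace the conjugation step by this invariance argument.

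Second, in the case $n-m=1$, $s>1$, your chain never reaches $x_0$. The only relation you list involving $\fr m_{00}$ is $[\fr m_{00},\fr m_{i0}]\subset\fr m_{i0}$, invoked only for $n-m\ge 2$; and in any event a two-term test vector $X=X_{00}+X_{i0}$ does \emph{not} force $x_0=x_{i0}$, since exactly this freedom survives in the sphere case $s=1$. The paper closes this case with a three-module test vector $X=X_{00}+X_{01}+X_{12}$ with $X_{12}\in\fr m_{12}$ arbitrary: the arbitrariness of $X_{12}$ forces $[a,\fr m_{12}]=0$, a short ideal argument then kills the $\fr{sp}(n_1)$-component of $a$, and the $\fr m_{01}$-component of the g.o.\ equation collapses to $(x_{01}-x_0)\,[X_{00},X_{01}]=0$. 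Your sketch needs an analogous device to link $x_0$ to the rest when $n-m=1$.
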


We remark that the non standard geodesic orbit metric $g_{\mu}$ appears in  \cite{Nik0} and \cite{Ta}.  

\smallskip
 The paper is structured as follows: In Section \ref{prelsec} we present  preliminary facts about homogeneous spaces and some special properties of the isotropy representation, to be used in our study.  In Section \ref{g.o.sec} we state and prove some useful properties of g.o. spaces.  
 In Section \ref{Isotropy2}, we describe the isotropy representation of the spaces $G/H=Sp(n)/Sp(n_1)\times \cdots \times Sp(n_s)$ and the structure of the tangent space $T_o(G/H)$ induced from the isotropy representation. In Section \ref{proofPrep} we state and prove some preliminary propositions required for the proof of Theorem \ref{main2}.  Finally, in Section \ref{proof2} we prove Theorem \ref{main2}.

\medskip
\noindent 
{\bf Acknowledgement.} This research is co-financed by Greece and the European Union (European Social Fund- ESF) through the Operational Programme ``Human Resources Development, Education and Lifelong Learning 2014-2020" in the context of the project ``Geodesic orbit metrics on homogeneous spaces of classical Lie groups" (MIS 5047124).

\section{Preliminaries}\label{prelsec}

\subsection{Invariant metrics on homogeneous spaces}

Let $G/H$ be a homogeneous space with origin $o=eH$ and assume that $G$ is compact.  Let $\fr{g},\fr{h}$ be the Lie algebras of $G,H$ respectively.  Moreover, let $\op{Ad}:G\rightarrow \op{Aut}(\fr{g})$ and $\op{ad}:\fr{g}\rightarrow \op{End}(\fr{g})$ be the adjoint representations of $G$ and $\fr{g}$ respectively, where $\op{ad}(X)Y=[X,Y]$.  Since $G$ is compact, there exists an $\op{Ad}$-invariant (and hence $\op{ad}$ skew-symmetric) inner product $B$ on $\fr{g}$, which we henceforth fix.  In turn, we have a $B$-orthogonal \emph{reductive decomposition}

\begin{equation}\label{ReducDecompos}\fr{g}=\fr{h}\oplus \fr{m},\end{equation}

\noindent where the subspace $\fr{m}$ is $\op{Ad}(H)$-invariant (and $\op{ad}(\fr{h})$-invariant) and is naturally identified with the tangent space of $G/H$ at the origin.

 A Riemannian metric $g$ on $G/H$ is called $G$-invariant if for any $x\in G$ the left translations $\tau_x:G/H\rightarrow G/H$, $pH\mapsto (xp)H$, are isometries of $(G/H,g)$.  The $G$-invariant metrics are in one to one correspondence with $\op{Ad}(H)$-invariant inner products $\langle \ , \ \rangle$ on $\fr{m}$.  Moreover, any such product corresponds to a unique endomorphism $A:\fr{m}\rightarrow \fr{m}$, called the \emph{corresponding metric endomorphism}, that satisfies 
 
 \begin{equation}\label{MetEnd}\langle X,Y \rangle =B(AX,Y) \ \ \makebox{for all} \ \ X,Y\in \fr{m}.\end{equation} 
 
\noindent It follows from Equation \eqref{MetEnd} that the metric endomorphism $A$ is symmetric with respect $B$, positive definite and $\op{Ad}(H)$-equivariant, that is $(\op{Ad}(h)\circ A)(X)=(A\circ \op{Ad}(h))(X)$ for all $h\in H$ and $X\in \fr{m}$.  Conversely, any endomorphism on $\fr{m}$ with the above properties determines a unique $G$-invariant metric on $G/H$.  

Since $A$ is diagonalizable, there exists a decomposition $\fr{m}=\bigoplus_{j=1}^l\fr{m}_{\lambda_j}$ into eigenspaces $\fr{m}_{\lambda_j}$ of $A$, corresponding to distinct eigenvalues $\lambda_j$. Each eigenspace $\fr{m}_{\lambda_j}$ is $\op{Ad}(H)$-invariant.  When an $\op{Ad}$-invariant inner product $B$ and a $B$-orthogonal reductive decomposition \eqref{ReducDecompos} have been fixed, we will make no distinction between a $G$-invariant metric $g$ and its corresponding metric endomorphism $A$.

The form of the $G$-invariant metrics on $G/H$ depends on the \emph{isotropy representation} $\op{Ad}^{G/H}:H\rightarrow \op{Gl}(\fr{m})$, defined by $\op{Ad}^{G/H}(h)X:=(d\tau_h)_o(X)$, $h\in H$, $X\in \fr{m}$.  We consider a $B$-orthogonal decomposition

\begin{equation}\label{IsotropyDecom}\fr{m}=\fr{m}_1\oplus \cdots \oplus\fr{m}_s,\end{equation}

\noindent into $\op{Ad}^{G/H}$-invariant and irreducible submodules. We recall that two submodules $\fr{m}_i$ and $\fr{m}_j$ are equivalent if there exists an $\op{Ad}^{G/H}$-equivariant isomorphism $\phi:\fr{m}_i\rightarrow \fr{m}_j$. The simplest case occurs when all the submodules $\fr{m}_i$ are pairwise inequivalent.  Then any $G$-invariant metric $A$ on $G/H$ has a diagonal expression with respect to decomposition \eqref{IsotropyDecom}.  In particular, $\left.A\right|_{\fr{m}_j}=\lambda_j\op{Id}$, $j=1,\dots,s$.

The next proposition is useful to compute the isotropy representation of a reductive homogeneous space. 

\begin{prop}\textnormal{(\cite{Arv})}\label{isotrepr}
Let $G/H$ be a homogeneous space and let $\fr{g} = \fr{h}\oplus\fr{m}$ be a reductive decomposition of $\fr{g}$.  Let $h\in H$, $X\in \fr{h}$ and $Y\in\fr{m}$.  Then
$$
\Ad^{G}(h)(X + Y) = \Ad^{H}(h)X + \Ad^{G/H}(h)Y
$$
that is, the restriction $\Ad^{G}\big|_{H}$ splits into the sum $\Ad^{H}\oplus\Ad^{G/H}$.  We denote by $\chi$ the $\Ad^{G/H}$. 
\end{prop}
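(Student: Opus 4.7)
The plan is to establish the splitting by separately identifying the action of $\Ad^G(h)$ on each summand of the reductive decomposition $\fr{g} = \fr{h}\oplus \fr{m}$. Since $\Ad^G(h)$ is linear, it suffices to verify that for $h\in H$ one has $\Ad^G(h)X = \Ad^H(h)X$ whenever $X\in\fr{h}$, and $\Ad^G(h)Y = \Ad^{G/H}(h)Y$ whenever $Y\in\fr{m}$. The reductive hypothesis guarantees that both summands are $\Ad^G(H)$-invariant (invariance of $\fr{h}$ is automatic because $H$ is a subgroup, while invariance of $\fr{m}$ is the defining property of a reductive decomposition), so the two pieces $\Ad^G(h)X$ and $\Ad^G(h)Y$ land in $\fr{h}$ and $\fr{m}$ respectively, and the proposed splitting makes sense.

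For the first identification I would use that the adjoint representation is the differential at the identity of the conjugation map. Writing $C_h^G(g) = hgh^{-1}$, one has $\Ad^G(h)X = (dC_h^G)_e X$. For $X\in\fr{h}$ the curve $t\mapsto h\exp(tX)h^{-1}$ takes values in $H$, so its velocity at $t=0$ coincides with the differential of the corresponding conjugation $C_h^H$ on $H$, which by definition is $\Ad^H(h)X$.

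For the second identification I would invoke the canonical isomorphism $\fr{m} \cong T_o(G/H)$ sending $Y\mapsto \tfrac{d}{dt}\big|_{t=0}\exp(tY)\cdot o$. Applying $\tau_h$ and using $h\exp(tY) = \exp(t\,\Ad^G(h)Y)\,h$, I compute
\begin{equation*}
(d\tau_h)_o Y \;=\; \frac{d}{dt}\bigg|_{t=0}\tau_h\bigl(\exp(tY)\cdot o\bigr) \;=\; \frac{d}{dt}\bigg|_{t=0}\exp\bigl(t\,\Ad^G(h)Y\bigr)\cdot o,
\end{equation*}
which, under the identification above, corresponds to the vector $\Ad^G(h)Y\in\fr{m}$; by definition $(d\tau_h)_o Y = \Ad^{G/H}(h)Y$, completing the identification.

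There is no substantive obstacle; the only point requiring care is the bookkeeping between the abstract adjoint action of $G$ on $\fr{g}$ and the geometric action of $H$ on $T_o(G/H)$ via the differentials $(d\tau_h)_o$, and the reductive condition is precisely what ensures these two actions agree on the $\fr{m}$-component. Adding the two identifications then yields the desired decomposition $\Ad^G(h)(X+Y) = \Ad^H(h)X + \Ad^{G/H}(h)Y$.
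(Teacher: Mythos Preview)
Your proof is correct and is the standard argument for this fact. Note, however, that the paper does not supply its own proof of this proposition: it is simply cited from the reference \cite{Arv} and stated without proof, so there is nothing to compare against. Your argument is precisely the one found in that textbook and in most treatments of homogeneous spaces: verify the splitting on each summand separately, using that $\Ad^G(h)$ restricted to $\fr{h}$ is $\Ad^H(h)$ by definition, and that on $\fr{m}$ it coincides with $(d\tau_h)_o$ via the canonical identification $\fr{m}\cong T_o(G/H)$, the latter being exactly where the reductive condition $\Ad^G(H)\fr{m}\subseteq\fr{m}$ is used.
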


The following lemma provides a simple condition for proving that two $\Ad^{G/H}$-submodules are inequivalent.

\begin{lemma}\label{EquivalentLemma}Let $G/H$ be a homogeneous space with  reductive decomposition $\fr{g}=\fr{h}\oplus \fr{m}$ and let $\fr{m}_i,\fr{m}_j\subset \fr{m}$ be submodules of the isotropy representation $\Ad^{G/H}$.  Assume that for any pair of non-zero vectors $X\in \fr{m}_i$, $Y\in \fr{m}_j$ there exists a vector $a\in \fr{h}$ such that $[a,X]=0$ and $[a,Y]\neq 0$.  Then the submodules $\fr{m}_i,\fr{m}_j$ are $\Ad^{G/H}$-inequivalent.\end{lemma}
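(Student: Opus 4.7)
The plan is to argue by contradiction: assume a candidate $\Ad^{G/H}$-equivariant isomorphism $\phi:\fr{m}_i\rightarrow\fr{m}_j$ exists, and use the hypothesis together with a single nonzero vector to force a contradiction.

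First, I would pass from group-level equivariance to Lie-algebra-level equivariance. By definition, $\phi\circ\Ad^{G/H}(h)=\Ad^{G/H}(h)\circ\phi$ for every $h\in H$. Applying this to $h=\exp(ta)$ with $a\in\fr{h}$ and differentiating in $t$ at $t=0$ yields
\begin{equation*}
\phi([a,X])=[a,\phi(X)] \qquad \text{for all } a\in\fr{h},\ X\in\fr{m}_i,
\end{equation*}
where we use that $\ad(a)$ preserves $\fr{m}$ by $\Ad(H)$-invariance of $\fr{m}$ (and that $[a,X]$ denotes the $\fr{m}$-component, which equals the full bracket here since $\fr{m}$ is $\ad(\fr{h})$-invariant). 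This is the key identity bridging the hypothesis (phrased in terms of brackets) with the assumption on $\phi$ (phrased in terms of $\Ad^{G/H}$).

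Next, I would pick any nonzero $X\in\fr{m}_i$ and set $Y:=\phi(X)\in\fr{m}_j$, which is nonzero since $\phi$ is an isomorphism. By the hypothesis of the lemma, applied to this specific pair $(X,Y)$, there exists $a\in\fr{h}$ such that $[a,X]=0$ and $[a,Y]\neq 0$. But then the equivariance identity above gives
\begin{equation*}
[a,Y]=[a,\phi(X)]=\phi([a,X])=\phi(0)=0,
\end{equation*}
contradicting $[a,Y]\neq 0$. Hence no $\Ad^{G/H}$-equivariant isomorphism can exist, and $\fr{m}_i,\fr{m}_j$ are inequivalent.

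I do not foresee a substantive obstacle here; the only subtle point is the passage from $\Ad^{G/H}$-equivariance to $\ad(\fr{h})$-equivariance via differentiation of one-parameter subgroups, which is routine but worth stating explicitly so that the hypothesis (formulated at the Lie-algebra level) can be applied.
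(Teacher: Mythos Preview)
Your proof is correct and follows essentially the same argument as the paper: assume an $\Ad^{G/H}$-equivariant isomorphism $\phi$ exists, pass to $\ad(\fr{h})$-equivariance, and apply the hypothesis to the pair $(X,\phi(X))$ to obtain a contradiction. Your write-up is in fact slightly more explicit than the paper's, since you spell out the differentiation step and the choice $Y=\phi(X)$.
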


\begin{proof}
If $\fr{m}_i,\fr{m}_j$ are equivalent, then there exists an $\Ad^{G/H}$-equivariant isomorphism $\phi:\fr{m}_i\rightarrow \fr{m}_j$.  The $\operatorname{Ad}^{G/H}$-equivariance of $\phi$ implies that $\phi$ is $\op{ad}_{\fr{h}}$-equivariant.  In other words,

\begin{equation*}\phi([a,X])=[a,Y],\end{equation*}
\noindent for any $a\in \fr{h}$.  However, $\phi$ is an isomorphism, therefore, $[a,X]$ is non-zero if and only if $[a,Y]$ is non-zero, which contradicts the hypothesis of the lemma.  Hence, the submodules $\fr{m}_i,\fr{m}_j$ are inequivalent.
\end{proof}
\begin{remark} For any two $\Ad^{G/H}$-submodules $\fr{m}_1,\fr{m}_2$, we denote by $[\fr{m}_1,\fr{m}_2]$ the space generated by the vectors $[X_1,X_2]$ where $X_1\in \fr{m}_1$ and $X_2\in \fr{m}_2$.  Similarly, denote by $[\fr{h},\fr{m}_1]$ the space generated by the vectors $[a,X_1]$ where $a\in \fr{h}$ and $X_1\in \fr{m}_1$.  If $\fr{m}_1,\fr{m}_2$ are $B$-orthogonal then $[\fr{m}_1,\fr{m}_2]\subseteq \fr{m}$.  Indeed, $[\fr{m}_1,\fr{m}_2]$ is $B$-orthogonal to $\fr{h}$ because $B([\fr{m}_1,\fr{m}_2],\fr{h})\subseteq B(\fr{m}_1,[\fr{m}_2,\fr{h}])\subseteq B(\fr{m}_1,\fr{m}_2)=\{0\}$.  Moreover, by the Jacobi identity, $[\fr{m}_1,\fr{m}_2]$ is also an $\Ad^{G/H}$-submodule and $[\fr{h},\fr{m}_1]$ is an $\Ad^{G/H}$-submodule of $\fr{m}_1$.
\end{remark}

\section{Properties of geodesic orbit spaces}\label{g.o.sec}

\begin{definition}
A $G$-invariant metric $g$ on $G/H$ is called a geodesic orbit metric (g.o. metric) if any geodesic of $(G/H,g)$ through $o$ is an orbit of a one parameter subgroup of $G$.  Equivalently, $g$ is a geodesic orbit metric if for any geodesic $\gamma$ of $(G/H,g)$ though $o$ there exists a non-zero vector $X\in \fr{g}$ such that $\gamma(t)=\exp (tX)\cdot o$, $t\in \mathbb R$.  The space $(G/H,g)$ is called a geodesic orbit space (g.o. space). 

\end{definition}

Let $G/H$ be a homogeneous space with $G$ compact, fix an $\op{Ad}$-invariant inner product $B$ on $\fr{g}$ and consider the $B$-orthogonal reductive decomposition \eqref{ReducDecompos}.  Moreover, identify each $G$-invariant metric on $G/H$ with the corresponding metric endomorphism $A:\fr{m}\rightarrow \fr{m}$.

  We have the following condition.

\begin{prop}\emph{(\cite{AlAr}, \cite{So1})}\label{GOCond} The metric $A$ on $G/H$ is geodesic orbit if and only if for any vector $X\in\fr{m}\setminus \left\{ {0} \right\}$ there exists a vector $a\in \fr{h}$ such that  

\begin{equation}\label{cor}[a+X,AX]=0.\end{equation}
\end{prop}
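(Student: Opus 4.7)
The strategy is to reduce the geodesic orbit property to a pointwise algebraic condition on $\fr g$ by introducing the notion of a \emph{geodesic vector}: a vector $Y \in \fr g$ for which the orbit $t \mapsto \exp(tY)\cdot o$ is a geodesic of $(G/H, g)$.

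First, I would reformulate the g.o. property as follows: $g$ is geodesic orbit if and only if every nonzero $X \in \fr m$ arises as the $\fr m$-component of some geodesic vector $Y = a + X$ with $a \in \fr h$. For the forward direction, take the unique geodesic $\gamma$ of $(G/H, g)$ with $\gamma(0) = o$ and $\dot\gamma(0)$ corresponding to $X$ under the identification $T_o(G/H) \simeq \fr m$; by the g.o. hypothesis $\gamma(t) = \exp(tY)\cdot o$ for some $Y \in \fr g$, and matching initial velocities at $t=0$ forces $Y_{\fr m} = X$, so $Y = a + X$ with $a \in \fr h$. The converse is immediate from the definition.

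Next, I would invoke the standard characterization of geodesic vectors, a consequence of the formula for the Levi-Civita connection on a reductive homogeneous space: $Y \in \fr g$ is a geodesic vector if and only if
\[
\langle [Y, Z]_{\fr m}, Y_{\fr m} \rangle = 0 \quad \text{for every } Z \in \fr m,
\]
where $\langle \cdot, \cdot \rangle$ denotes the $\Ad(H)$-invariant inner product on $\fr m$ corresponding to $g$. This characterization would be cited from \cite{AlAr} or \cite{So1} rather than reproved.

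The last step is a short algebraic manipulation. Substituting $Y = a + X$ and using $\langle V, W \rangle = B(AV, W)$ together with the $B$-symmetry of $A$, one computes
\[
\langle [a+X, Z]_{\fr m}, X \rangle = B([a+X, Z]_{\fr m}, AX) = B([a+X, Z], AX),
\]
where the last equality uses $AX \in \fr m$ and $B(\fr h, \fr m) = 0$. The $\ad$-invariance of $B$ then rewrites this as $B(Z, [AX, a+X])$, and the vanishing of this quantity for every $Z \in \fr m$ is equivalent to $[AX, a+X] \in \fr h$, which is the content of \eqref{cor}. The only mildly delicate ingredient is the appeal to the geodesic vector characterization in the second step, whose proof requires the explicit formula for the invariant Riemannian connection on a reductive space; since this is classical and the paper already cites \cite{AlAr, So1}, I would import it as a black box rather than reconstruct it.
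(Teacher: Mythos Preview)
The paper does not actually prove this proposition: it is quoted with references to \cite{AlAr} and \cite{So1} and then used as a tool. Your proposal therefore goes further than the paper by sketching the standard argument through geodesic vectors, and the overall route you outline is the correct and expected one.

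There is, however, a genuine (if small) gap in your final step. From the geodesic-vector criterion you correctly obtain that $B(Z,[AX,a+X])=0$ for all $Z\in\fr m$, which says exactly that $[a+X,AX]\in\fr h$. You then identify this with ``the content of \eqref{cor}'', but \eqref{cor} asserts the stronger statement $[a+X,AX]=0$. The two are equivalent, yet this requires an additional argument you omit: since $\fr m$ is $\op{ad}(\fr h)$-invariant, $[a,AX]\in\fr m$, so the $\fr h$-component of $[a+X,AX]$ equals $[X,AX]_{\fr h}$; and for any $W\in\fr h$ one has, using $\op{ad}$-invariance of $B$, the $\op{Ad}(H)$-equivariance of $A$ (hence $[W,AX]=A[W,X]$), and the $B$-symmetry of $A$,
\[
B([X,AX],W)=B(AX,[W,X])\quad\text{and}\quad B([X,AX],W)=-B(X,A[W,X])=-B(AX,[W,X]),
\]
so $B([X,AX],W)=0$ and $[X,AX]_{\fr h}=0$. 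With this observation added, your sketch is complete.
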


The following result, which we will call the \emph{normalizer lemma}, can be used to simplify the necessary form of the g.o. metrics on $G/H$ by using the normalizer $N_G(H^0)$.

\begin{lemma}\label{NormalizerLemma}\emph{(\cite{Ni3})}
The inner product $\langle \ ,\ \rangle$, generating the metric of a geodesic orbit Riemannian space $(G/H,g)$, is not only $\op{Ad}(H)$-invariant but also $\op{Ad}(N_G(H^0))$-invariant, where $N_G(H^0)$ is the normalizer of the unit component $H^0$ of the group $H$ in $G$.\end{lemma}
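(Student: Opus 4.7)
The plan is to show, for every $n\in N_G(H^0)$, that $\Ad(n)$ restricted to $\fr{m}$ preserves the inner product $\langle\cdot,\cdot\rangle$; equivalently, that the metric endomorphism $A$ commutes with $\Ad(n)|_{\fr{m}}$. Since the covering $\pi\colon G/H^0\to G/H$ is a local isometry for the pulled-back metric and the geodesic orbit property lifts and descends locally, we may replace $H$ by $H^0$ and assume $H$ is connected; the inner product on $\fr{m}\subset\fr{g}$ is the same in either case. For $n\in N_G(H)$, the definition of the normalizer gives $\Ad(n)\fr{h}=\fr{h}$, and since $B$ is $\Ad$-invariant with $\fr{m}=\fr{h}^{\perp_{B}}$, also $\Ad(n)\fr{m}=\fr{m}$, so $\Ad(n)|_{\fr{m}}$ is a well-defined invertible operator. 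Invariance under $\Ad(H)$ is given, and passing to the identity component of $N_G(H)$ via the exponential reduces the claim to the infinitesimal statement that for every $Z\in\fr{n}_{\fr{g}}(\fr{h})\cap\fr{m}$ the restriction $\ad(Z)|_{\fr{m}}\colon\fr{m}\to\fr{m}$ (well-defined because $\exp(tZ)$ normalizes $\fr{h}$ and hence preserves $\fr{m}$) is skew-symmetric for $\langle\cdot,\cdot\rangle$, i.e.\ $[\ad(Z)|_{\fr{m}},A]=0$.

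\textbf{A deformation family of geodesic orbit metrics.} Fix such a $Z$ and, for $t\in\mathbb{R}$, define the twisted conjugation
\[
\phi_t\colon G/H\to G/H,\qquad gH\mapsto \exp(-tZ)\,g\,\exp(tZ)\,H,
\]
which is well-defined because $\exp(tZ)\in N_G(H)$. The map $\phi_t$ fixes the origin with differential $\Ad(\exp(-tZ))|_{\fr{m}}$ there, and satisfies $\phi_t\circ L_{g'}=L_{\exp(-tZ)g'\exp(tZ)}\circ\phi_t$ for every $g'\in G$. Since inner automorphisms of $G$ send one-parameter subgroups to one-parameter subgroups, $\phi_t$ carries orbits of one-parameter subgroups of $G$ to orbits of one-parameter subgroups of $G$. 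Consequently $g_t:=\phi_t^{*}g$ is again a $G$-invariant geodesic orbit metric on $G/H$, with metric endomorphism $A_t=\Ad(\exp(tZ))\,A\,\Ad(\exp(-tZ))|_{\fr{m}}$. The task thus reduces to proving $A_t\equiv A$, equivalently $\dot A_0=[\ad(Z),A]|_{\fr{m}}=0$.

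\textbf{Rigidity and main obstacle.} For each $t$ and each nonzero $X\in\fr{m}$, Proposition \ref{GOCond} yields $a_t(X)\in\fr{h}$ with $[a_t(X)+X,A_t X]=0$. Differentiating at $t=0$ produces
\[
[a_0(X)+X,\dot A_0 X]+[\dot a_0(X),AX]=0,
\]
an identity that must hold for every $X$ in a sufficiently large subset of $\fr{m}$ to force $\dot A_0=0$. Restricting $X$ to the dense open stratum of $\fr{m}$ on which the $\Ad(H)$-stabilizer of the pair $(X,AX)$ is of minimal dimension, a smooth selection $t\mapsto a_t(X)$ becomes available near $t=0$, and combining the derivative identity with the algebraic consequences of the g.o.\ condition already imposed on $A$ (in particular the polarized version of $[Y,AY]_{\fr{h}}=0$) pins down $\dot A_0 X=0$ on the stratum, whence $\dot A_0=0$ by continuity. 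The main obstacle lies precisely in this last stage: the element $a_t(X)\in\fr{h}$ is generically non-unique and need not depend smoothly, or even continuously, on $X$, so a careful regularity analysis on the generic stratum of $\fr{m}$ is needed to legitimize the $t$-differentiation and to extract the global commutation $[\ad(Z),A]|_{\fr{m}}=0$.
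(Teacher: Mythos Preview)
The paper does not prove this lemma; it is quoted from \cite{Ni3}. Comparing your proposal to the argument there, your deformation strategy has a genuine gap at the ``rigidity'' step.

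Even granting a smooth selection $t\mapsto a_t(X)$, the $\fr{m}$-component of your differentiated identity contains the term $[\dot a_0(X),AX]$ with $\dot a_0(X)\in\fr{h}$ completely uncontrolled, and you give no mechanism for eliminating it. The $\fr{h}$-component yields only $[X,\dot A_0X]_{\fr{h}}=0$, whose polarization is precisely the first-order condition satisfied by \emph{any} smooth family of g.o.\ metrics through $g$. Since non-trivial such families exist (e.g.\ the metrics $g_\mu$ on $\Sp(n)/\Sp(n-1)$ in Theorem \ref{main2}), this information cannot by itself force $\dot A_0=0$; your assertion that the combined constraints ``pin down $\dot A_0X=0$'' is exactly the point that needs an argument and does not receive one. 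A secondary gap: the infinitesimal reduction captures only the identity component of $N_G(H^0)$, and $H$ together with that component need not generate the full normalizer.

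Nikonorov's proof bypasses deformations entirely and applies the g.o.\ condition (Proposition \ref{GOCond}) once, to a well-chosen vector. Observe that $\fr{n}:=\fr{n}_{\fr{g}}(\fr{h})\cap\fr{m}$ is the $\ad(\fr{h})$-fixed subspace of $\fr{m}$; since $A$ is $\Ad(H)$-equivariant, $A\fr{n}\subseteq\fr{n}$ and thus $\fr{n}=\bigoplus_i(\fr{n}\cap\fr{m}_{\lambda_i})$. For $Z\in\fr{n}\cap\fr{m}_{\lambda_i}$ and $X\in\fr{m}_{\lambda_j}$ with $i\neq j$, apply Proposition \ref{GOCond} to $Y=Z+X$: there is $a\in\fr{h}$ with
\[
0=[a+Z+X,\lambda_iZ+\lambda_jX]=\lambda_j[a,X]+(\lambda_j-\lambda_i)[Z,X],
\]
using $[a,Z]=0$. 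Since $[a,X]\in\fr{m}_{\lambda_j}$ and $\lambda_i\neq\lambda_j$, this forces $[Z,X]\in\fr{m}_{\lambda_j}$. The remaining case $[Z,\fr{m}_{\lambda_i}]\subseteq\fr{m}_{\lambda_i}$ follows from $B$-skew-symmetry of $\ad(Z)$. Hence $\ad(Z)$ preserves every eigenspace of $A$ and $[\ad(Z),A]|_{\fr{m}}=0$, with no regularity analysis required.
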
 

\noindent As a result of the normalizer lemma, the metric endomorphism $A$ of a g.o. metric on $G/H$ is $\op{Ad}(N_G(H^0))$-equivariant. We will now state a complementary result to the normalizer lemma for compact spaces, that characterizes the restriction of a g.o. metric to the compact Lie group $N_G(H^0)/H^0$. 

\begin{lemma}\label{DualNormalizer}
Let $(G/H,g)$ be a compact geodesic orbit space with the $B$-orthogonal reductive decomposition $\fr{g}=\fr{h}\oplus \fr{m}$, where $B$ is an $\op{Ad}$-invariant inner product on $\fr{g}$. Let $A:\fr{m}\rightarrow \fr{m}$ be the corresponding metric endomorphism of $g$, and let $\fr{n}\subseteq \fr{m}$ be the Lie algebra of the compact Lie group $N_G(H^0)/H^0$.  Then the restriction of $A$ to $\fr{n}$   defines a bi-invariant metric on $N_G(H^0)/H^0$.
\end{lemma}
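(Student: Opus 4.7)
The plan is to identify $\fr{n}$ intrinsically inside $\fr{m}$ and then reduce the conclusion to an $\op{ad}$-skew-symmetry property on $\fr{n}$, using the g.o. condition of Proposition \ref{GOCond} applied to a one-parameter family of vectors. Since $B$ is $\op{Ad}$-invariant and $\fr{h}$ is preserved by $\op{Ad}(N_G(H^0))$, the complement $\fr{m}$ is preserved too, and the Lie algebra of $N_G(H^0)/H^0\cong N_{\fr{g}}(\fr{h})/\fr{h}$ is identified with the $\op{Ad}(H^0)$-fixed subspace
\[
\fr{n}=\{X\in\fr{m}:[X,\fr{h}]=0\};
\]
the Lie bracket on $\fr{n}$ inherited from $N_G(H^0)/H^0$ is then $[X,Y]_{\fr{m}}$, the $\fr{m}$-component of $[X,Y]$. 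Since $A$ is $\op{Ad}(H)$-equivariant, it preserves this fixed subspace, so $A(\fr{n})\subseteq\fr{n}$, and bi-invariance of the metric determined by $A|_{\fr{n}}$ on $N_G(H^0)/H^0$ amounts to
\[
\langle[X,Y]_{\fr{m}},Z\rangle+\langle Y,[X,Z]_{\fr{m}}\rangle=0\quad\text{for all }X,Y,Z\in\fr{n}.
\]

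The heart of the argument is to derive the identity $[X,AY]+[Y,AX]=0$ for all $X,Y\in\fr{n}$. Fix such $X,Y$; for each $t\in\mathbb{R}$, Proposition \ref{GOCond} applied to $X+tY\in\fr{m}$ provides some $a_t\in\fr{h}$ with $[a_t+X+tY,A(X+tY)]=0$. Since $A(X+tY)\in\fr{n}$ commutes with $\fr{h}$, the term $[a_t,A(X+tY)]$ vanishes, leaving
\[
[X+tY,A(X+tY)]=0\qquad(t\in\mathbb{R}).
\]
Reading off the coefficient of $t$ in this polynomial identity yields the claimed relation.

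To finish, take $X,Y,Z\in\fr{n}$. Symmetry of $A$ with respect to $B$ together with the $B$-orthogonality of $\fr{h}$ and $\fr{m}$ give
\[
\langle[X,Y]_{\fr{m}},Z\rangle=B([X,Y],AZ),\qquad\langle Y,[X,Z]_{\fr{m}}\rangle=B(AY,[X,Z]).
\]
Applying $\op{ad}$-invariance of $B$ together with the identities $[X,AZ]=-[Z,AX]$ and $[X,AY]=-[Y,AX]$ from the previous step, both quantities become $\pm B([Y,Z],AX)$, so their sum vanishes.

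The main obstacle is the key-identity step: everything hinges on the inclusion $A(\fr{n})\subseteq\fr{n}$, which allows us to discard the auxiliary element $a_t\in\fr{h}$ in the g.o. condition and reduce it to the clean polynomial identity $[X+tY,A(X+tY)]=0$. Without this observation one would have to control the $\fr{h}$-dependence of $a_t$ as $t$ varies; once it is in place, the remaining steps are formal manipulations with the $\op{ad}$-invariance of $B$.
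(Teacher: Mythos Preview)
Your argument is correct, but it follows a different route from the paper's proof, and the comparison is worth noting.

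The paper first invokes the normalizer lemma (Lemma~\ref{NormalizerLemma}) to conclude that $A$ preserves the $B$-orthogonal complement $\fr{p}$ of $\fr{n}$ in $\fr{m}$, and then uses the $B$-symmetry of $A$ to infer $A\fr{n}\subseteq\fr{n}$. It then observes that the g.o.\ condition $[a+X,AX]=0$ for $X\in\fr{n}$ yields, via Proposition~\ref{GOCond}, a g.o.\ metric on the Lie group $N_G(H^0)/H^0$; finally it quotes the external result (\cite{AlNi}) that any left-invariant g.o.\ metric on a Lie group is bi-invariant. Your approach is more self-contained at both ends: you obtain $A\fr{n}\subseteq\fr{n}$ immediately from the description of $\fr{n}$ as the $\op{Ad}(H^0)$-fixed subspace of $\fr{m}$ and the $\op{Ad}(H)$-equivariance of $A$, bypassing the normalizer lemma entirely; and instead of citing the Lie-group result, you extract the identity $[X,AY]+[Y,AX]=0$ directly from the polynomial $[X+tY,A(X+tY)]=0$ (noting that $[a_t,A(X+tY)]=0$ because $A(X+tY)\in\fr{n}$ centralizes $\fr{h}$) and verify $\op{ad}$-skew-symmetry of $\langle\cdot,\cdot\rangle$ on $\fr{n}$ by hand. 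What the paper's route buys is brevity, given the cited lemmas; what yours buys is an elementary, stand-alone argument that in fact reproves, in this special setting, the implication ``g.o.\ on a Lie group $\Rightarrow$ bi-invariant'' used in the paper. One minor caveat: your $\op{ad}$-skew-symmetry yields bi-invariance under the identity component of $N_G(H^0)/H^0$; if full bi-invariance is intended, a word about connectedness (or an appeal to the normalizer lemma for the disconnected part) would close the gap, though for the applications in this paper the group in question is $\Sp(n_0)$ and hence connected.
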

 \begin{proof}  We denote by $\fr{n}_{\fr{g}}(\fr{h})\subset \fr{g}$ the Lie algebra of $N_G(H^0)$.  We have a $B$-orthogonal decomposition $\fr{n}_{\fr{g}}(\fr{h})=\fr{h}\oplus \fr{n}$, where $\fr{n}$ coincides with the Lie algebra of $N_G(H^0)/H^0$.  Moreover, we have a $B$-orthogonal decomposition $\fr{m}=\fr{n}\oplus \fr{p}$, where $\fr{p}$ coincides with the tangent space of $G/N_G(H^0)$ at the origin.  By the normalizer lemma, the restriction of $A$ on $\fr{p}$ defines an invariant metric on $G/N_G(H^0)$, and thus $A\fr{p}\subseteq \fr{p}$.  By taking into account the symmetry of $A$ with respect to the product $B$, we deduce that $B(A\fr{n},\fr{p})=B(\fr{n},A\fr{p})\subseteq B(\fr{n},\fr{p})=\{0\}$.  Hence, the image $A\fr{n}$ is $B$-orthogonal to $\fr{p}$ which, along with decomposition $\fr{m}=\fr{n}\oplus \fr{p}$, yields $A\fr{n}\subseteq \fr{n}$. Therefore, the restriction $\left.A\right|_{\fr{n}}:\fr{n}\rightarrow \fr{n}$ defines a left-invariant metric on $N_G(H^0)/H^0$.  Since $A$ is a g.o. metric on $G/H$, Proposition \ref{GOCond} implies that for any $X\in \fr{n}$ there exists a vector $a\in \fr{h}$ such that $0=[a+X,AX]=[a+X,\left.A\right|_{\fr{n}}X]$. Therefore, by the same proposition, $\left.A\right|_{\fr{n}}$ defines a g.o. metric on $N_G(H^0)/H^0$.  On the other hand, any left-invariant g.o. metric on a Lie group is necessarily bi-invariant (\cite{AlNi}), and hence $\left.A\right|_{\fr{n}}$ is a bi-invariant metric on $N_G(H^0)/H^0$.
 \end{proof}

\begin{remark}\label{conclusion} Let $\fr{p}\subset \fr{m}$ be the tangent space of $G/N_G(H^0)$, let $\fr{n}\subset \fr{m}$ be the Lie algebra of $N_G(H^0)/H^0$, and consider the decomposition $\fr{m}=\fr{n}\oplus \fr{p}$.  By combining lemmas \ref{NormalizerLemma} and \ref{DualNormalizer}, we conclude that the metric endomorphism $A$ corresponding to a g.o. metric has the block-diagonal form
\begin{equation*}A=\begin{pmatrix}\left.A\right|_{\fr{n}} & 0\\
0& \left.A\right|_{\fr{p}}\end{pmatrix},\end{equation*}

\noindent where $\left.A\right|_{\fr{n}}$ defines a bi-invariant metric on $N_G(H^0)/H^0$ and $\left.A\right|_{\fr{p}}$ defines a g.o. metric on $G/N_G(H^0)$.
\end{remark}

\smallskip
The following  lemma describes the invariant g.o. metrics on compact Lie groups in terms of their metric endomorphism with respect to an $\op{Ad}$-invariant inner product $B$.  Recall that the Lie algebra $\fr{g}$ of a compact Lie group $G$ has a direct sum decomposition $\fr{g}=\fr{g}_1\oplus \cdots \oplus \fr{g}_k\oplus \fr{z}$, where $\fr{g}_j$ are  simple ideals of $\fr{g}$ and $\fr{z}$ is its center.

\begin{lemma}\label{GOLieGroups}\emph{(\cite{AlNi}, \cite{So1})} 
Let $G$ be a compact Lie group with Lie algebra $\fr{g}=\fr{g}_1\oplus \cdots \oplus \fr{g}_k\oplus \fr{z}$.  A left invariant metric $A$ on $G$ is a g.o. metric if and only if it is bi-invariant.  In particular, $A$ is a g.o. metric if and only if 

\begin{equation*}A=\begin{pmatrix} 
 \lambda_1\left.\op{Id}\right|_{\fr{g}_1} & 0 & \cdots &0\\
  \vdots & \ddots & \cdots &\vdots\\
  0&\cdots &\lambda_k\left.\op{Id}\right|_{\fr{g}_k} &0\\
  0& \cdots & 0 & \left.A\right|_{\fr{z}}
  \end{pmatrix}, \ \lambda_j>0.
\end{equation*}
\end{lemma}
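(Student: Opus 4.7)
The plan is to reduce the lemma to a purely algebraic condition on the metric endomorphism $A$ and then analyze its eigenspace decomposition. First, I would observe that on the homogeneous space $G/\{e\}$ the isotropy algebra is trivial, so Proposition \ref{GOCond} specializes to: the left-invariant metric $A$ is g.o.\ if and only if $[X, AX] = 0$ for every $X \in \fr{g}$.

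The key step is polarization: replacing $X$ by $X + Y$ in the quadratic identity gives $[X, AY] + [Y, AX] = 0$, i.e.\ $[X, AY] = [AX, Y]$ for all $X, Y \in \fr{g}$. Combined with the $B$-symmetry of $A$ and the $\op{ad}$-skewness of $B$, a short computation upgrades this to the stronger operator identity $A[X, Y] = [AX, Y] = [X, AY]$. This identity is exactly the statement that the associated inner product $\langle \cdot, \cdot \rangle = B(A\cdot, \cdot)$ is $\op{ad}$-skew, hence that the left-invariant metric is also right-invariant, establishing the first equivalence.

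Next, I would decompose $\fr{g}$ into $A$-eigenspaces $\fr{g}_\lambda$. For $X \in \fr{g}_\lambda$ and $Y \in \fr{g}_\mu$, the identity $A[X,Y] = [AX,Y] = \lambda[X,Y]$ together with $A[X,Y] = [X, AY] = \mu[X,Y]$ forces $[X, Y] = 0$ whenever $\lambda \neq \mu$, and also yields $[\fr{g}_\lambda, \fr{g}_\lambda] \subseteq \fr{g}_\lambda$. Thus each eigenspace is an ideal and $\fr{g}$ is their Lie algebra direct sum. Since each simple factor $\fr{g}_i$ admits no non-trivial ideal decomposition, it must sit inside a single eigenspace, forcing $\left.A\right|_{\fr{g}_i} = \lambda_i\op{Id}$ with $\lambda_i > 0$. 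On the center $\fr{z}$ there is no bracket constraint, so $\left.A\right|_{\fr{z}}$ may be any positive definite symmetric endomorphism. This yields the claimed block form. The converse is a direct check: decomposing $X = X_1 + \cdots + X_k + Z$ and using $[\fr{g}_i, \fr{g}_j] = 0$ for $i \neq j$, $[\fr{g}_i, \fr{z}] = 0$, and $[Z, AZ] = 0$, the sum collapses to $\sum_i \lambda_i[X_i, X_i] = 0$.

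The main obstacle is really the polarization-plus-self-adjointness step that turns the pointwise quadratic condition $[X, AX] = 0$ into the full operator identity encoding $\Ad(G)$-equivariance of $A$; once this equivalence between g.o.\ and bi-invariance is established, the eigenspace-ideal argument combined with the standard structure theorem $\fr{g} = \fr{g}_1 \oplus \cdots \oplus \fr{g}_k \oplus \fr{z}$ for compact Lie algebras gives the explicit block form with no further work.
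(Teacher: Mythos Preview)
Your argument is correct. The paper itself does not supply a proof of this lemma; it is stated with references to \cite{AlNi} and \cite{So1}, so there is no in-paper proof to compare against. What you have written is a clean self-contained argument: the specialization of Proposition~\ref{GOCond} to $H=\{e\}$ gives $[X,AX]=0$, polarization yields $[X,AY]=[AX,Y]$, and the computation
\[
B(A[X,Y],Z)=B([X,Y],AZ)=-B(Y,[X,AZ])=-B(Y,[AX,Z])=B([AX,Y],Z)
\]
then produces the full derivation property $A[X,Y]=[AX,Y]=[X,AY]$, which is exactly bi-invariance of $\langle\cdot,\cdot\rangle$. The eigenspace argument deducing the block form from this identity is standard and correctly executed. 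This is essentially the line of reasoning in the cited sources, so nothing is missing.
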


\smallskip
The following terminology will be useful.
Let $W$ be a subspace of a vector space $V$ and we write $V=W\oplus W^{\bot}$ with respect to some inner product $B$ on $V$.  Then, for $v\in V$ it is $v=w+w'$, where $w\in W$ and $w'\in W^{\bot}$.  

\begin{definition}
The vector $v$ 
  has \emph{non zero projection on $W$} if $w'\neq 0$. 
  A subset $S$ of $V$ has non zero projection on $W$ if there exists a vector $v\in S$ that has non zero projection on $W$.
  \end{definition}
  Finally, the folowing lemma is useful, since it will enable us to equate some of the eigenvalues of a g.o. metric.

\begin{lemma}\label{EigenEq}\emph{(\cite{So1})}
 Let $(G/H,g)$ be a g.o. space with $G$ compact and with corresponding metric endomorphism $A$ with respect to an $\op{Ad}$-invariant inner product $B$. Let $\fr{m}$ be the $B$-orthogonal complement of $\fr{h}$ in $\fr{g}$.\\ 
\textbf{1.} Assume that $\fr{m}_1,\fr{m}_2$ are $\operatorname{ad}(\fr{h})$-invariant, pairwise $B$-orthogonal subspaces of $\fr{m}$ such that $[\fr{m}_1,\fr{m}_2]$ has non zero projection on ${(\fr{m}_1\oplus \fr{m}_2)^\bot}$. Let $\lambda_1,\lambda_2$ be eigenvalues of $A$ such that $\left.A\right|_{\fr{m}_i}=\lambda_i\op{Id}$, $i=1,2$.  Then $\lambda_1=\lambda_2$.\\
\textbf{2.} Assume that $\fr{m}_1,\fr{m}_2,\fr{m}_3$ are $\operatorname{ad}(\fr{h})$-invariant, pairwise $B$-orthogonal subspaces of $\fr{m}$ such that $[\fr{m}_1,\fr{m}_2]$ has non-zero projection on $\fr{m}_3$. Let $\lambda_1,\lambda_2,\lambda_3$ be eigenvalues of $A$ such that $\left.A\right|_{\fr{m}_i}=\lambda_i\op{Id}$, $i=1,2,3$.  Then $\lambda_1=\lambda_2=\lambda_3$.
\end{lemma}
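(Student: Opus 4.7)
The plan is to apply the geodesic orbit criterion of Proposition \ref{GOCond} to carefully chosen test vectors $X\in\fr{m}_1\oplus\fr{m}_2$, expand the resulting identity $[a+X,AX]=0$, and then project onto a subspace on which only the ``bad'' term survives. The key observation is that each $\fr{m}_i$ lies in an eigenspace of $A$, so on $\fr{m}_1\oplus\fr{m}_2$ the endomorphism $A$ acts block-diagonally with scalars $\lambda_1,\lambda_2$, and this lets us isolate the factor $\lambda_2-\lambda_1$ in front of a bracket term.

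For part 1, I would take arbitrary nonzero $X_1\in\fr{m}_1$ and $X_2\in\fr{m}_2$, set $X=X_1+X_2$, and invoke Proposition \ref{GOCond} to obtain $a\in\fr{h}$ with
\begin{equation*}
0=[a+X_1+X_2,\lambda_1 X_1+\lambda_2 X_2].
\end{equation*}
Using the $\ad(\fr{h})$-invariance of $\fr{m}_1$ and $\fr{m}_2$, which places $[a,X_i]\in\fr{m}_i$, this expands to
\begin{equation*}
\lambda_1[a,X_1]+\lambda_2[a,X_2]+(\lambda_2-\lambda_1)[X_1,X_2]=0.
\end{equation*}
By the Remark following Lemma \ref{EquivalentLemma} applied to the $B$-orthogonal pair $\fr{m}_1,\fr{m}_2$, the bracket $[X_1,X_2]$ sits inside $\fr{m}$. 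Projecting the displayed equation $B$-orthogonally onto $(\fr{m}_1\oplus\fr{m}_2)^\perp\cap\fr{m}$ kills the first two summands and leaves $(\lambda_2-\lambda_1)\,\pi([X_1,X_2])=0$, where $\pi$ is that projection. The hypothesis provides a choice of $X_1,X_2$ for which $\pi([X_1,X_2])\neq 0$, forcing $\lambda_1=\lambda_2$.

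For part 2, the inclusion $\fr{m}_3\subset(\fr{m}_1\oplus\fr{m}_2)^\perp$ makes the hypothesis of part 1 automatic for the pair $(\fr{m}_1,\fr{m}_2)$, so $\lambda_1=\lambda_2$. To obtain $\lambda_1=\lambda_3$ I would transfer the nonvanishing bracket condition to a different pair via the $\ad$-skew symmetry of $B$: if $X_1,X_2,X_3$ witness $B([X_1,X_2],X_3)\neq 0$, then $B([X_3,X_1],X_2)\neq 0$, which means $[\fr{m}_3,\fr{m}_1]$ has nonzero projection onto $\fr{m}_2$, and $\fr{m}_2\subset(\fr{m}_1\oplus\fr{m}_3)^\perp$ by pairwise $B$-orthogonality. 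Applying part 1 to the pair $(\fr{m}_1,\fr{m}_3)$ then yields $\lambda_1=\lambda_3$, completing the equality $\lambda_1=\lambda_2=\lambda_3$.

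I do not expect a serious obstacle. The only point that needs a line of care is confirming that $[\fr{m}_i,\fr{m}_j]\subseteq\fr{m}$, so that the projection onto $(\fr{m}_1\oplus\fr{m}_2)^\perp\cap\fr{m}$ is meaningful; this is exactly the content of the Remark recalled above. Everything else is bookkeeping of which summands land in which $\ad(\fr{h})$-invariant subspace.
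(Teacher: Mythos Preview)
The paper does not supply its own proof of this lemma; it is quoted from \cite{So1} and stated without argument. Your proposal is correct and is the standard proof: apply Proposition \ref{GOCond} to $X=X_1+X_2$, use that $A$ is scalar on each $\fr{m}_i$ to reduce $[a+X,AX]=0$ to $\lambda_1[a,X_1]+\lambda_2[a,X_2]+(\lambda_2-\lambda_1)[X_1,X_2]=0$, and project $B$-orthogonally off $\fr{m}_1\oplus\fr{m}_2$. The invocation of the Remark after Lemma \ref{EquivalentLemma} to place $[X_1,X_2]$ in $\fr{m}$ is exactly the right justification, and the cyclic trick $B([X_1,X_2],X_3)=B([X_3,X_1],X_2)$ via $\ad$-skew-symmetry of $B$ is the natural way to reduce part 2 to two applications of part 1.
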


\section{The space $\Sp(n)/\Sp(n_1)\times\cdots\times\Sp(n_s)$, $\sum n_i\le n$}\label{Isotropy2}

We compute  the isotropy representation of $M=G/H=\Sp(n)/\Sp(n_1)\times\cdots\times\Sp(n_s)$, $n_1+\cdots +n_s\le n$.
Denote by  $\nu_{2n}$ the standard representation of  $\Sp(n)$, 
that is   $\nu_{2n}  : \Sp(n)\to \Aut(\bb{C}^{2n})$. 
 Then  the complexified adjoint representation of $\Sp(n)$ is given by  $\Ad^{\Sp(n)}\otimes\bb{C} = S^{2}\nu_{2n}$, where  $S^{2}$ is the second symmetric power of $\nu_{2n}$.
Let $\sigma _{n_i}: \Sp(n_{1})\times\cdots\times\Sp(n_{s})\to\Sp(n_{i})$ be the projection onto the $i$-factor and $\varphi_{i} = \nu_{2n_{i}}\circ\sigma_{n_{i}}$
 be the  projection of the standard representation of $H$, i.e.  
$$ 
\Sp(n_{1})\times\cdots\times\Sp(n_{s})\stackrel{\sigma_{n_{i}}}{\longrightarrow}\Sp(n_{i})\stackrel{\nu_{2n_{i}}}{\longrightarrow}\Aut(\bb{C}^{2n_{i}}). 
$$
We set $n_0 := n-(n_1+n_2+\cdots + n_s)$.

Then we have:
\begin{eqnarray}
\Ad^{G}\otimes\mathbb{C}\big|_{H} &=& S^2\nu _{2n}\big|_{H} 
   = S^2(\varphi_{1}\oplus\cdots\oplus \varphi_{s} \oplus \mathbbm{1}_{2n_0}) 
   = S^{2}\varphi_{{1}}\oplus S^{2}\varphi_{{2}}\oplus\cdots\oplus S^{2}\varphi_{s}\oplus S^{2}\mathbbm{1}_{2n_0}\nonumber \\
&&\oplus\{(\varphi_{{1}}\otimes \varphi_{{2}})\oplus\cdots\oplus(\varphi_{{1}}\otimes \varphi_{{s}})\}\oplus\{(\varphi_{{2}}\otimes \varphi_{{3}})\oplus\cdots\oplus (\varphi_2\otimes \varphi_{s})\} \oplus \cdots\oplus (\varphi_{s-1}\otimes \varphi_s)\nonumber \\
&&
\oplus(\varphi_{{1}}\otimes \mathbbm{1}_{2n_0})\oplus(\varphi_{{2}}\otimes \mathbbm{1}_{2n_0})\oplus\cdots\oplus(\varphi_{{s}}\otimes \mathbbm{1}_{2n_0})\label{FirstIsotr},
\end{eqnarray}
where $S^2\mathbbm{1}_{2n_0} = \mathbbm{1}\oplus\cdots\oplus \mathbbm{1}$  is the sum of $\binom{2n_0+1}{2}=n_0(2n_0+1)$ trivial representations. 


 The representation $S^{2}\varphi_{{1}}\oplus S^{2}\varphi_{{2}}\oplus\cdots\oplus S^{2}\varphi_{s}$ is equal to $\Ad ^H\otimes\mathbb{C}$, the complexified adjoint 
representation of  $H = \Sp(n_{1})\times\cdots\times\Sp(n_{s})$.
 
Then, in view of Equation \eqref{FirstIsotr}, Proposition \ref{isotrepr} implies that
the complexified isotropy representation of $G/H$ is given by
\begin{eqnarray}
\chi\otimes\bb{C} &=& S^{2}\mathbbm{1}_{2n_0}\oplus\{(\varphi_{{1}}\otimes \varphi_{{2}})\oplus\cdots\oplus(\varphi_{{1}}\otimes \varphi_{{s}})\}\oplus\{(\varphi_{{2}}\otimes \varphi_{{3}})\oplus\cdots\oplus (\varphi_2\otimes \varphi_{s})\} \oplus \cdots \oplus (\varphi_{s-1}\otimes \varphi_s) \nonumber\\
&&\oplus(\varphi_{{1}}\otimes \mathbbm{1}_{2n_0})\oplus(\varphi_{{2}}\otimes \mathbbm{1}_{2n_0})\oplus\cdots\oplus(\varphi_{{s}}\otimes \mathbbm{1}_{2n_0})\nonumber \\
&=&S^{2}\mathbbm{1}_{2n_0}\bigoplus_{1\leq i<j\leq s}{(\varphi_{{i}}\otimes \varphi_{{j}})}\bigoplus_{j=1}^s{(\varphi_{{j}}\otimes \mathbbm{1}_{2n_0})}.\label{isotropySp}
\end{eqnarray}
Also, $\dim_{\bb{C}}({\varphi_i\otimes \varphi_j}) = 4n_i n_j$  and each of $\varphi_j\otimes \mathbbm{1}_{2n_0}\cong(\varphi_j\otimes\mathbbm{1})\oplus\cdots\oplus(\varphi_j\otimes\mathbbm{1})$ is a sum of $2n_0$ equivalent representations, each of dimension $2n_j$, $j = 1,2,\ldots, s$ (and hence $\dim_{\bb{C}}({\varphi_j\otimes \mathbbm{1}_{2n_0}})=4kn_j$).
One can confirm that 
$$
\dim (\chi\otimes\bb{C})=n_0(2n_0+1)+\sum_{1\le i<j\le s}4n_in_j+\sum_{j=1}^s 4n_0n_j=2n^2-2\sum_{j=1}^s n_j^2+n_0=
\dim M.
$$

Expression  (\ref{isotropySp}) induces a decomposition of the complexified tangent space $\fr{m}$ of $G/H$ as
$$
\fr{m}\otimes\mathbb{C}=\fr{n}_1'\oplus\cdots\oplus\fr{n}_{n_0(2n_0+1)}' \bigoplus_{1\le i<j\le s}\fr{n}_{ij}\bigoplus_{j=1}^{s}\fr{n}_{0j},
$$
where $\dim_{\mathbb{C}}(\fr{n}_i') = 1$, $\dim_{\mathbb{C}}\fr{n}_{ij}=4n_in_j$ and $\fr{n}_{0j} = \fr{n}_1^{j}\oplus\fr{n}_2^{j}\oplus\cdots\oplus\fr{n}_{2n_0}^{j}$, with $\fr{n}_{\al}^{j}\cong\fr{n}_{\beta}^{j}$, $\al \neq \beta$ and $\dim_{\mathbb{C}}(\fr{n}^{j}_{\ell})= 2n_j$, $\ell=1,2,\ldots, 2n_0$.

The (real) decomposition of the tangent space of $G/H$ is given by
\begin{equation}\label{isotrsp}
\fr{m}=\fr{n}_1\oplus\cdots\oplus\fr{n}_{n_0(2n_0+1)} \bigoplus_{1\le i<j\le s}\fr{m}_{ij}\bigoplus_{j=1}^{s}\fr{m}_{0j},
\end{equation}
\noindent
where  $\fr{m}_{ij}\otimes\mathbb{C}=\fr{n}_{ij}$, $\fr{m}_{0j}\otimes\mathbb{C}=\fr{n}_{0j}$,
$\dim_{\mathbb{R}}(\fr{n}_i) = 1$, 
$\dim_{\mathbb{R}}(\fr{m}_{ij}) = 4n_in_j$,
$\dim_{\mathbb{R}}(\fr{m}_{0j}) =4n_0n_j$.
Also, $\fr{m}_{0j} =\fr{m}_1^j\oplus\cdots\oplus\fr{m}_{2n_0}^j$ with
$\fr{m}_\ell^j\otimes\mathbb{C}=\fr{n}_\ell^j$, $\fr{m}_\al^j\cong\fr{m}_\beta^j$, $\al\ne\beta$ and 
$\dim_{\mathbb{R}}(\fr{m}_\ell^j) = 2n_j$, $\ell=1, \dots, 2n_0$.
Note that $\fr{n}_1\oplus\cdots\oplus\fr{n}_{n_0(2n_0+1)}\cong\fr{sp}(n_0)$.

\begin{remark} If $n_0=0$ the isotropy representation of $G/H$ has no equivalent representations and all above expressions simplify.
\end{remark}

\medskip
We now give explicit matrix representations  of $\fr{m}_{ij}, \fr{m}_{0j}$.
Recall the Lie algebra of $\Sp(n)$,
\begin{equation*}
\mathfrak{sp}(n)=\left\{\begin{pmatrix}
X & -{}\bar{Y}\\
Y & \bar{X}
\end{pmatrix}\ \Big\vert 
 \begin{array}{l}X\in\mathfrak{u}(n), \ 
Y\ \mbox{ is  a}\  n\times n\ \mbox{complex symmetric matrix}
\end{array}
\right\}\subset\fr{u}(2n).\ 
\end{equation*} 
For $i=1, \dots , s$, we embed 
$\fr{sp}(n_i)=\left\{\begin{pmatrix}
X_i & -{}\bar{Y}_i\\
Y_i & \bar{X}_i
\end{pmatrix}\right\}$ 
in $\fr{sp}(n)$  as

\begin{equation*}
 \left\{\left( 
 \begin{array}{ccccccccccc}
  0 & \cdots &  0  & \cdots & 0 & \vline & 0 & \cdots &0 &\cdots &0\\ 
   \vdots &  &  \vdots  &  & \vdots & \vline & \vdots &  &\vdots &  &0\\
 0 & \cdots &  X_i  & \cdots & 0  & \vline& 0 &\cdots &-{}\bar{Y}_i &\cdots &0\\
  \vdots &  &  \vdots  &  & \vdots & \vline & \vdots &  &\vdots &  &0\\
0 & \cdots &  0  & \cdots & 0  & \vline& 0 &\cdots &0 &\cdots &0\\
\hline
0 & \cdots &  0  & \cdots & 0 & \vline & 0 & \cdots &0 &\cdots &0\\ 
 \vdots &  &  \vdots  &  & \vdots & \vline & \vdots &  &\vdots &  &0\\
0 & \cdots &  Y_i  & \cdots & 0  & \vline& 0 &\cdots &\bar{X}_i &\cdots &0\\
 \vdots &  &  \vdots  &  & \vdots & \vline & \vdots &  &\vdots &  &0\\
0 & \cdots &  0  & \cdots & 0  & \vline& 0 &\cdots &0 &\cdots &0
  \end{array}
  \right)
  \right\}.
 \end{equation*}

We consider the $\Ad(\Sp(n))$-invariant inner product $B:\fr{sp}(n)\times\fr{sp}(n)\to\mathbb{R}$, given by
\begin{equation}\label{KillSp(n)}
B(X, Y)= -{\rm Trace}(XY), \quad X, Y\in\fr{sp}(n),
\end{equation}
\bigskip
and we obtain 
a $B$-orthogonal decomposition  $\fr{g}=\fr{h}\oplus\fr{m}$, where $\fr{h}=\fr{sp}(n_1)\oplus\cdots\oplus\fr{sp}(n_s)$ and $\fr{m}\cong T_o(G/H)$.  
 We note that $B$ is a multiple of the Killing form of $\fr{sp}(n)$.

Next, we consider a basis for $\fr{g}=\fr{sp}(n)$ as follows.
Let $M_{2n}\mathbb{C}$ be the set of $2n\times 2n$ complex matrices and we consider the following matrices in  $M_{2n}\mathbb{C}$ with zeros in all entries except the ones indicated:

$E_{ab}$ with $1$ in $(a, b)$-entry and $1$ in $(n+a, n+b)$-entry.

$F_{ab}$ with $i$ in $(a, b)$-entry and $-i$ in $(n+a, n+b)$-entry.

$G_{ab}$ with $-1$ in $(a, n+b)$-entry and $1$ in $(n+b, a)$-entry.

$H_{ab}$ with $i$ in $(a, n+b)$-entry and $i$ in $(n+b, a)$-entry.

For $1\le a <b\le 2n$ we set
\begin{equation}\label{basissp}
e_{ab}=E_{ab}-E_{ba}, \quad f_{ab}=F_{ab}+F_{ba}, \quad g_{ab}=G_{ab}+G_{ba}, \quad
h_{ab}=H_{ab}+H_{ba}.   
\end{equation}
Then the set $\mathcal{B}=\{e_{ab}, f_{ab}, g_{ab}, h_{ab}: 1\le a<b\le n; \ f_{aa}, g_{aa}, h_{aa}: 1\le a\le n\}$ 
constitutes a basis of $\fr{sp}(n)$, which is orthogonal with respect to $B$.

The above matrices have the form
{\small
\begin{equation*}
 e_{ab}=\left( 
 \begin{array}{ccccccccccc}
  \square &  &   &  &  & \vline & \square &  &  &  &  \\ 
   & \ddots &  &  &  & \vline &  & \ddots &  &  & \\
   &  & \square  & 1 &  & \vline &  &  & \square &  & \\
   &  & -1  & \ddots &  & \vline &  &  &  & \ddots & \\
   &  &   &  & \square & \vline &  &  &  &  & \square\\
  \hline
   \square &  &   &  &  & \vline & \square &  &  &  &  \\
    & \ddots &  &  &  & \vline &  & \ddots &  &  & \\
     &  & \square  &  &  & \vline &  &  & \square & 1 & \\
       &  &   & \ddots &  & \vline &  &  & -1 & \ddots & \\
        &  &   &  & \square & \vline &  &  &  &  & \square
  \end{array}
  \right),  
  f_{ab}=\left( 
 \begin{array}{ccccccccccc}
  \square &  &   &  &  & \vline & \square &  &  &  &  \\ 
   & \ddots &  &  &  & \vline &  & \ddots &  &  & \\
   &  & \square  & i &  & \vline &  &  & \square &  & \\
   &  & i  & \ddots &  & \vline &  &  &  & \ddots  & \\
   &  &   &  & \square & \vline &  &  &  &  & \square\\
  \hline
   \square &  &   &  &  & \vline & \square &  &  &  &  \\
    & \ddots &  &  &  & \vline &  & \ddots &  &  & \\
     &  & \square  &  &  & \vline &  &  & \square & -i & \\
       &  &   & \ddots &  & \vline &  &  & -i & \ddots & \\
        &  &   &  & \square & \vline &  &  &  &  & \square
  \end{array}
  \right), 
 \end{equation*} 
 }
 
 {\small
 \begin{equation*}
 g_{ab}=\left( 
 \begin{array}{ccccccccccc}
  \square &  &   &  &  & \vline & \square &  &  &  &  \\ 
   & \ddots &  &  &  & \vline &  & \ddots &  &  & \\
   &  & \square  &  &  & \vline &  &  & \square & -1 & \\
   &  &   & \ddots &  & \vline &  &  & -1 &\ddots  & \\
   &  &   &  & \square & \vline &  &  &  &  & \square\\
  \hline
   \square &  &   &  &  & \vline & \square &  &  &  &  \\
    & \ddots &  &  &  & \vline &  & \ddots &  &  & \\
     &  & \square  & 1 &  & \vline &  &  & \square &  & \\
       &  & 1  & \ddots &  & \vline &  &  &  & \ddots & \\
        &  &   &  & \square & \vline &  &  &  &  & \square
  \end{array}
  \right),  
   h_{ab}=\left( 
 \begin{array}{ccccccccccc}
  \square &  &   &  &  & \vline & \square &  &  &  &  \\ 
   & \ddots &  &  &  & \vline &  & \ddots &  &  & \\
   &  & \square  &  &  & \vline &  &  & \square & i & \\
   &  &   & \ddots &  & \vline &  &  & i &\ddots  & \\
   &  &   &  & \square & \vline &  &  &  &  & \square\\
  \hline
   \square &  &   &  &  & \vline & \square &  &  &  &  \\
    & \ddots &  &  &  & \vline &  & \ddots &  &  & \\
     &  & \square  & i &  & \vline &  &  & \square &  & \\
       &  & i  & \ddots &  & \vline &  &  &  & \ddots & \\
        &  &   &  & \square & \vline &  &  &  &  & \square
  \end{array}
  \right).  
 \end{equation*}

 

Recall also the relations $e_{ba}=-e_{ab}$, $f_{ba}=f_{ab}$, $g_{ba}=g_{ab}$ and $h_{ba}=h_{ab}$.  The next lemma follows from straightforward calculations.

\begin{lemma}\label{rel2}
The Lie-bracket relations among the vectors {\rm (\ref{basissp})} are given as follows: 
\begin{align*}
[e_{ij},e_{lm}]&=\delta_{jl}e_{im}+\delta_{im}e_{jl}-\delta_{il}e_{jm}-\delta_{jm}e_{il} &[e_{ij},f_{lm}]&=\delta_{jl}f_{im}-\delta_{im}f_{jl}-\delta_{il}f_{jm}+\delta_{jm}f_{il}\\
[e_{ij},g_{lm}]&=-\delta_{jl}g_{im}+\delta_{im}g_{jl}-\delta_{il}g_{jm}+\delta_{jm}g_{il} &[e_{ij},h_{lm}]&=-\delta_{jl}h_{im}+\delta_{im}h_{jl}-\delta_{il}h_{jm}+\delta_{jm}h_{il}\\
[f_{ij},f_{lm}]&=-\delta_{jl}e_{im}-\delta_{im}e_{jl}-\delta_{il}e_{jm}-\delta_{jm}e_{il} &[f_{ij},g_{lm}]&=-\delta_{jl}h_{im}-\delta_{im}h_{jl}-\delta_{il}h_{jm}-\delta_{jm}h_{il}\\
[f_{ij},h_{lm}]&=\delta_{jl}g_{im}+\delta_{im}g_{jl}+\delta_{il}g_{jm}+\delta_{jm}g_{il} &[g_{ij},g_{lm}]&=-\delta_{jl}e_{im}-\delta_{im}e_{jl}-\delta_{il}e_{jm}-\delta_{jm}e_{il} \\
[g_{ij},h_{lm}]&=-\delta_{jl}f_{im}-\delta_{im}f_{jl}-\delta_{il}f_{jm}-\delta_{jm}f_{il} &[h_{ij},h_{lm}]&=-\delta_{jl}e_{im}-\delta_{im}e_{jl}-\delta_{il}e_{jm}-\delta_{jm}e_{il}.
\end{align*}
\end{lemma}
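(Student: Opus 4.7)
The plan is to verify these identities by direct matrix computation. First I would rewrite each generator in terms of standard elementary matrix units $\epsilon_{pq}$ (the $2n\times 2n$ matrix with a single $1$ in the $(p,q)$-entry and zeros elsewhere). From the displayed forms of $E_{ab}, F_{ab}, G_{ab}, H_{ab}$ together with the definitions in (\ref{basissp}), each of $e_{ij}, f_{ij}, g_{ij}, h_{ij}$ becomes an explicit four-term sum of $\epsilon_{pq}$'s, with signs and factors of $i$ read directly from the block pictures above. Every bracket in the lemma is then a sum of at most sixteen commutators of elementary matrices, each evaluated by the universal rule $[\epsilon_{pq}, \epsilon_{rs}] = \delta_{qr}\epsilon_{ps} - \delta_{ps}\epsilon_{rq}$.

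The key organizing observation is a block classification that filters the right-hand sides before any calculation begins. The matrices $e_{ij}$ and $f_{ij}$ occupy the ``diagonal'' $(a,b)$ and $(n+a,n+b)$ blocks (the $\fr{u}(n)$-like directions), while $g_{ij}$ and $h_{ij}$ occupy the ``off-diagonal'' $(a,n+b)$ and $(n+a,b)$ blocks. Commutators of two elements from the same class land in the diagonal blocks, producing combinations of $e$ and $f$; mixed commutators land in the off-diagonal blocks, producing combinations of $g$ and $h$. This explains, a priori, the ``type'' of the output vector in each of the ten relations, and also clarifies why brackets between two $\fr{u}(n)$-like generators or two off-diagonal generators return $e$'s and $f$'s, whereas the mixed brackets return $g$'s and $h$'s.

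With this pattern fixed, the remaining work reduces to collecting Kronecker $\delta$ contributions. For a representative case, $[e_{ij}, e_{lm}] = [E_{ij} - E_{ji}, E_{lm} - E_{ml}]$ expands into four commutators of elementary-block sums; applying the bracket rule and using the symmetry relations $e_{ba} = -e_{ab}$, $f_{ba} = f_{ab}$, $g_{ba} = g_{ab}$, $h_{ba} = h_{ab}$ to fold doubled terms together reproduces the claimed $\delta_{jl}e_{im} + \delta_{im}e_{jl} - \delta_{il}e_{jm} - \delta_{jm}e_{il}$. Each of the remaining nine bracket types is handled by the identical procedure, the only substantive difference being the careful tracking of signs and of factors of $i$ contributed by the definitions of $F_{ab}$ and $H_{ab}$.

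The sole obstacle, and the reason the author characterizes the calculation as straightforward rather than trivial, is bookkeeping: managing the signs, the $i$'s, and the identification of which basis vector ($e_{pq}$, $f_{pq}$, $g_{pq}$, or $h_{pq}$) each collected block is a multiple of. The block classification above makes this a mechanical check rather than a conceptual one, so no further obstacle arises.
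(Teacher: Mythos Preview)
Your approach is correct and matches the paper's: the authors simply state that the lemma ``follows from straightforward calculations,'' and your outline---expanding each generator in elementary matrix units $\epsilon_{pq}$, applying $[\epsilon_{pq},\epsilon_{rs}]=\delta_{qr}\epsilon_{ps}-\delta_{ps}\epsilon_{rq}$, and folding terms using the symmetry relations---is exactly what such a calculation entails. Your block classification (diagonal vs.\ off-diagonal) is a sensible organizing device, but there is nothing beyond direct verification here, which is all the paper claims.
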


 A choice for the modules in the decomposition (\ref{isotrsp}) is the following:
\begin{eqnarray*}
\fr{m}_{ij}&=&\Span_{\mathbb{R}}\{e_{ab}, f_{ab},g_{ab},h_{ab}:  n_0+n_1+\cdots +n_{i-1}+1\le a\le 
n_0+n_1+\cdots +n_i, \\
 && \ \ \ \ \ \ \ \ \ \quad  \ \quad \quad  n_0+n_1+\cdots +n_{j-1}+1\le b\le n_0+n_1+\cdots +n_j\}, \ 1\le i<j\le s\\
\fr{m}_{0j}&=&\Span_{\mathbb{R}}\{e_{ab}, f_{ab},g_{ab},h_{ab}: 1\le a\le n_0,\ n_0+n_1+\cdots +n_{j-1}+1\le b\le 
n_0+n_1+\cdots +n_j\}, \ j=1,\dots ,s.
\end{eqnarray*}

The $n_0(2n_0+1)$ trivial representations in (\ref{isotrsp}) generate the Lie algebra
$$\fr{n} = \Span_{\mathbb{R}}\{e_{ab}, f_{ab}, g_{ab}, h_{ab}: 1\le a<b\le n_0; \ 
f_{aa}, g_{aa}, h_{aa}: 1\le a\le n_0\},
$$
isomorphic to $\fr{sp}(n_0)$.

The equivalent modules in the decomposition of $\fr{m}_{0j}$ are given by
\begin{eqnarray*}
\fr{m}^j_\ell =\Span_{\mathbb{R}}\{e_{\ell b}, f_{\ell b}, g_{\ell b}, h_{\ell b}: n_0+n_1+\cdots +n_{j-1}+1\le b\le 
n_0+n_1+\cdots +n_j\}, \ \ell = 1, \dots , 2k.
\end{eqnarray*}
Also, for $j =1, \dots , s$, we have 
\begin{eqnarray*}
\fr{sp}(n_j)&=&\Span_{\mathbb{R}}\{e_{ab}, f_{ab}, g_{ab}, h_{ab}:  n_0+n_1+\cdots +n_{j-1}+1\le a<b\le 
n_0+n_1+\cdots +n_j; \\
 && \ \ \ \ \ \ \ \ f_{aa}, g_{aa}, h_{aa}: n_0+n_1+\cdots +n_{j-1}+1\le a\le 
n_0+n_1+\cdots +n_j
 \}.
\end{eqnarray*}

In summary, we obtain the $B$-orthogonal decomposition
\begin{equation}\label{dd2}\fr{m}=\fr{n}\oplus \fr{p},
\end{equation}
where
$$\fr{n}\cong\fr{sp}(n_0), \quad \fr{p}=\bigoplus_{1\le i<j\le s}\fr{m}_{ij}\bigoplus_{j=1}^{s}\fr{m}_{0j}=\bigoplus_{0\le i<j\le s}\fr{m}_{ij}.
$$
Decomposition (\ref{dd2}) as a subspace of $\fr{sp}(n)$ can be depicted in the following $2n\times 2n$ complex matrix, where the positions represented by  $\square$ are zero matrices (corresponding to positions of $\fr{sp}(n_i)$, $i=1, \dots s$):

\begin{equation*}
 \left( 
 \begin{array}{cccccccccccc}
  \fr{n} & \fr{m}_{01} &\cdots & & \fr{m}_{0s} &   \vline & \fr{n} & \fr{m}_{01} & \cdots& & \fr{m}_{0s} \\ 
 \fr{m}_{01}  & \square & \fr{m}_{12} & \cdots & \fr{m}_{1s} &  \vline & \fr{m}_{01} &\square & \fr{m}_{12} & \cdots & \fr{m}_{1s}  \\
 \fr{m}_{02}  & \fr{m}_{12} & \square &\ddots & \vdots   & \vline  &\fr{m}_{02} &\fr{m}_{12} & \square &\ddots & \vdots \\
 \vdots  & \vdots & &\ddots  & \fr{m}_{s-1,s}  & \vline & \vdots & \vdots& &\ddots  & \fr{m}_{s-1,s} \\
 \fr{m}_{0s}  & \fr{m}_{1s} &  \cdots &  & \square  & \vline & \fr{m}_{02} & \fr{m}_{1s} & \cdots & & \square & \\
     \hline
     \fr{n} & \fr{m}_{01} &\cdots & & \fr{m}_{0s} &   \vline & \fr{n} & \fr{m}_{01} & \cdots& & \fr{m}_{0s} \\ 
 \fr{m}_{01}  & \square & \fr{m}_{12} & \cdots & \fr{m}_{1s} &  \vline & \fr{m}_{01} &\square & \fr{m}_{12} & \cdots & \fr{m}_{1s}  \\
 \fr{m}_{02}  & \fr{m}_{12} & \square &\ddots & \vdots   & \vline  &\fr{m}_{02} &\fr{m}_{12} & \square &\ddots & \vdots \\
 \vdots  & \vdots & &\ddots  & \fr{m}_{s-1,s}  & \vline & \vdots & \vdots& &\ddots  & \fr{m}_{s-1,s} \\
 \fr{m}_{0s}  & \fr{m}_{1s} &  \cdots &  & \square  & \vline & \fr{m}_{02} & \fr{m}_{1s} & \cdots & & \square & 
\end{array}
  \right).  
 \end{equation*}    
   
\begin{prop}\label{SubmoduleBrackets3} The following relations are satisfied by the  modules in the decomposition {\rm (\ref{isotrsp})}:
\begin{equation}\label{Temp0}
[ \fr{sp}(n_0), \fr{sp}(n_0)] \subseteq\fr{sp}(n_0),
\end{equation}
\begin{equation}\label{Temp1}[ \fr{m}_{ij}, \fr{m}_{jk}] \subseteq \fr{m}_{ik}, \ \ 0\leq i<j<k \leq s.
\end{equation}
\begin{equation}\label{temp2} 
[\fr{sp}(n_i),\fr{m}_{lm}]=\left\{ 
\begin{array}{lll}
\fr{m}_{lm},  \quad \makebox{if $i=l$ or $i=m$} \ \\

\{0\} ,   \quad \makebox{otherwise},
\end{array}
\right. \ \ l<m, \ \ i,l,m=0,\dots,s.
\end{equation}
\end{prop}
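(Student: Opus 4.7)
The plan is to verify each of the three inclusions by direct computation using the bracket table in Lemma \ref{rel2}, exploiting the disjointness of the index blocks $I_t := \{n_0 + n_1 + \cdots + n_{t-1} + 1, \ldots, n_0 + n_1 + \cdots + n_t\}$, $t = 0, 1, \ldots, s$, which appear in the spanning sets of the various submodules. The key observation is that a Kronecker delta $\delta_{xy}$ occurring in Lemma \ref{rel2} vanishes whenever $x$ and $y$ belong to different blocks. Equation \eqref{Temp0} is then immediate: the basis vectors defining $\fr{sp}(n_0)\cong\fr{n}$ all have indices in $I_0$, and any delta produced by a bracket is of the form $\delta_{xy}$ with $x,y\in I_0$, so the bracket lies again in $\fr{sp}(n_0)$.

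For Equation \eqref{Temp1}, I would take generators $v_{ab}\in\fr{m}_{ij}$ (so $a\in I_i$, $b\in I_j$) and $w_{a'b'}\in\fr{m}_{jk}$ (so $a'\in I_j$, $b'\in I_k$). With $0\le i<j<k\le s$, the blocks $I_i,I_j,I_k$ are pairwise disjoint, so among the four Kronecker deltas $\delta_{aa'},\delta_{ab'},\delta_{ba'},\delta_{bb'}$ that can arise in every formula of Lemma \ref{rel2}, only $\delta_{ba'}$ has any chance of being nonzero. The bracket therefore collapses to a single term of the form $\pm u_{ab'}$ (after using the symmetry relations $u_{b'a}=\pm u_{ab'}$ for $u\in\{e,f,g,h\}$); since $a\in I_i$ and $b'\in I_k$ with $i<k$, this vector lies in $\fr{m}_{ik}$, which yields the inclusion.

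For Equation \eqref{temp2}, I split into two cases. When $i\notin\{l,m\}$, the blocks $I_i,I_l,I_m$ are pairwise disjoint, every delta vanishes, and every bracket is identically zero. When $i=l$ (the case $i=m$ is symmetric), the inclusion $[\fr{sp}(n_l),\fr{m}_{lm}]\subseteq\fr{m}_{lm}$ follows by the same delta-tracking argument applied to brackets $[v_{aa''},w_{a'b}]$ with $a,a'',a'\in I_l$ and $b\in I_m$: only $\delta_{aa'}$ or $\delta_{a''a'}$ can be nonzero, and each surviving term is a basis vector with first index in $I_l$ and second in $I_m$. The reverse inclusion $\fr{m}_{lm}\subseteq[\fr{sp}(n_l),\fr{m}_{lm}]$ is obtained constructively: for any $a\in I_l$ the diagonal vector $f_{aa}$ lies in $\fr{sp}(n_l)$ (or in $\fr{sp}(n_0)$ when $l=0$), and the four brackets $[f_{aa},e_{ab}]$, $[f_{aa},f_{ab}]$, $[f_{aa},g_{ab}]$, $[f_{aa},h_{ab}]$ computed directly from Lemma \ref{rel2} return, up to nonzero scalars, the four basis vectors $f_{ab},e_{ab},h_{ab},g_{ab}$ of $\fr{m}_{lm}$, so every basis vector of $\fr{m}_{lm}$ is recovered.

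The main obstacle is not mathematical but combinatorial: each formula in Lemma \ref{rel2} contains four terms, so one must be methodical about tracking which deltas can contribute for a given index configuration. Once the uniform notation for the blocks $I_t$ is in place, each of the three inclusions reduces to a short case-by-case verification. A conceptually cleaner alternative for the reverse inclusion in \eqref{temp2} is to invoke Schur's lemma together with the $H$-irreducibility of $\fr{m}_{ij}$ for $1\le i<j\le s$ read off from \eqref{isotropySp}, but the direct approach above has the advantage of also handling the submodules $\fr{m}_{0j}$ and the case $i=0$ uniformly, without needing to refine the argument for the equivalent submodules $\fr{m}_\ell^j$.
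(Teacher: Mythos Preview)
Your proposal is correct and follows exactly the approach the paper indicates: the paper's own proof is a two-sentence sketch (``the first relation is true since $\fr{sp}(n_0)$ is a subalgebra; the second and third follow from Lemma~\ref{rel2} and the description of $\fr{m}_{ij}$ in terms of the basis $\mathcal{B}$''), and you have carried out precisely that computation in detail, including the reverse inclusion for \eqref{temp2} which the paper leaves implicit. The index-block bookkeeping with the $I_t$'s and the verification via the four brackets $[f_{aa},\,\cdot\,]$ are the natural way to unpack the paper's sketch, so there is no substantive difference in method.
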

\begin{proof} The first relation is true since $\fr{sp}(n_0)$ is a subalgebra of $\fr{g}=\fr{sp}(n)$.  The second and the third relations follow from Lemma \ref{rel2} and the expression of the submodules $\fr{m}_{ij}$ in terms of the basis $\mathcal{B}$. \end{proof}

\begin{remark} The above relations are not the only relations among the modules in decomposition {\rm (\ref{isotrsp})} which are valid, however these are the ones which we use in our study.
\end{remark}

\section{Some preliminary results}\label{proofPrep}
In order to prove our main theorem \ref{main2}} we will need some  propositions and lemmas, which we collect in the present section.

\smallskip
The following can be easily proved by induction on $s$.
\begin{lemma}\label{CombinatorialLemma1}
Let $R_s=\{\lambda_{ij}: \ 0\leq i<j\leq s\}$ be a set of real numbers such that $\lambda_{ij}=\lambda_{jk}=\lambda_{ik}$ for all $0\leq i<j<k\leq s$. Then $R_s$ is a  singleton.
\end{lemma}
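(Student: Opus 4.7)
The plan is to proceed by induction on $s$, as the author suggests. For $s=1$ the set $R_1=\{\lambda_{01}\}$ is trivially a singleton, and the hypothesis is vacuous. The meaningful base case is $s=2$: the only admissible triple is $(i,j,k)=(0,1,2)$, and the hypothesis directly gives $\lambda_{01}=\lambda_{12}=\lambda_{02}$, so $R_2$ is a singleton.

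For the inductive step, suppose the result holds for $s-1\geq 2$ and consider the set $R_s$. The subset $R_{s-1}=\{\lambda_{ij}:0\leq i<j\leq s-1\}$ satisfies the same hypothesis (it is inherited from $R_s$ by restricting the index range), so by the induction hypothesis there exists a common value $\lambda\in\mathbb{R}$ with $\lambda_{ij}=\lambda$ for all $0\leq i<j\leq s-1$. It remains to show $\lambda_{is}=\lambda$ for every $0\leq i\leq s-1$, and I would split this into two cases. If $i\leq s-2$, pick any $j$ with $i<j<s$ (for instance $j=i+1$); the hypothesis applied to $(i,j,s)$ yields $\lambda_{is}=\lambda_{ij}$, and $\lambda_{ij}\in R_{s-1}$, so $\lambda_{is}=\lambda$. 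If $i=s-1$, pick any $i'$ with $0\leq i'<s-1$ (possible since $s\geq 2$) and apply the hypothesis to the triple $(i',s-1,s)$, obtaining $\lambda_{s-1,s}=\lambda_{i',s-1}=\lambda$.

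Combining the two cases, every element of $R_s$ equals $\lambda$, closing the induction. There is no real obstacle here; the only subtlety is ensuring that in the second case one can always find an index $i'<s-1$, which is guaranteed by the assumption $s\geq 2$, and that the inductive step uses $s-1\geq 2$, which is why the base case is treated separately at $s=2$.
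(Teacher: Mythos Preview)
Your proof is correct and follows exactly the approach the paper indicates: the paper simply states that the lemma ``can be easily proved by induction on $s$'' and omits the details, so your argument is a faithful and complete elaboration of what the authors had in mind. One minor remark: the inductive step you wrote in fact works already for $s\geq 2$ (using the trivial base case $s=1$ for $R_{s-1}$), so treating $s=2$ separately is not strictly necessary, but it does no harm.
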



Consider the space $G/H=Sp(n)/Sp(n_1)\times \cdots \times Sp(n_s)$ and recall the description of its tangent space given in Section \ref{Isotropy2}.  Let $g$ be a $G$-invariant g.o. metric on $G/H$ with corresponding metric endomorphism  $A:\fr{m}\rightarrow \fr{m}$ (cf. Equation \eqref{MetEnd}), where we take $B$ to be the multiple  of the Killing form \eqref{KillSp(n)}.  Recall the spaces $\fr{n}=\fr{sp}(n_0)$ ($n_0=n- (n_1+\cdots +n_s)$) and  $\fr{p}$, which were defined in  decomposition \eqref{dd2}.
 
\begin{prop}\label{Simplif1sp}
$\left.A\right|_{\fr{p}}=\lambda\op{Id}$, where $\lambda>0$. 
\end{prop}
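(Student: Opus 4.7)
The plan is first to use the normalizer lemma to show that $A$ acts as a scalar on each submodule $\fr{m}_{ij}$ in the decomposition $\fr{p}=\bigoplus_{0\le i<j\le s}\fr{m}_{ij}$, and then to use the Lie-bracket relations of Proposition \ref{SubmoduleBrackets3} together with Lemma \ref{EigenEq} to equate these scalars across all pairs.

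First I would argue that $A$ is $\Ad(\Sp(n_0)\times H)$-equivariant. Because the $\Sp(n_0)$ block sits in $G=\Sp(n)$ in the coordinates complementary to those of $H$ and therefore commutes with $H$, the identity component $N_G(H^0)^0$ contains $\Sp(n_0)\times H$, whose Lie algebra is exactly $\fr{n}\oplus\fr{h}$. Lemma \ref{NormalizerLemma} then yields $\Ad(\Sp(n_0)\times H)$-equivariance of $A$. Under this enlarged group the decomposition $\fr{p}=\bigoplus_{0\le i<j\le s}\fr{m}_{ij}$ is a decomposition into pairwise inequivalent irreducible submodules: each $\fr{m}_{0j}$, which splits into $2n_0$ equivalent pieces $\fr{m}_\ell^j$ under $\Ad(H)$ alone, is a single irreducible $\Ad(\Sp(n_0)\times \Sp(n_j))$-module, since it carries the external tensor product $\nu_{2n_0}\otimes \varphi_j$ of standard representations; and for distinct index pairs the corresponding tensor representations are obviously inequivalent. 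By Schur's lemma, $A$ preserves each $\fr{m}_{ij}$ and $\left.A\right|_{\fr{m}_{ij}}=\lambda_{ij}\op{Id}$ for some $\lambda_{ij}>0$, the positivity coming from $A$ being positive definite.

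Next I would invoke the g.o. condition. By Proposition \ref{SubmoduleBrackets3}, $[\fr{m}_{ij},\fr{m}_{jk}]\subseteq \fr{m}_{ik}$ for every triple $0\le i<j<k\le s$; and this bracket is nonzero, since for $a$ in the $i$-th index block, $b$ in the $j$-th block and $c$ in the $k$-th block, Lemma \ref{rel2} gives $[e_{ab},e_{bc}]=e_{ac}$, which is a nonzero element of $\fr{m}_{ik}$. Hence $[\fr{m}_{ij},\fr{m}_{jk}]$ has nonzero projection on $\fr{m}_{ik}$, and Lemma \ref{EigenEq}(2) applied to the pairwise $B$-orthogonal, $\op{ad}(\fr{h})$-invariant triple $(\fr{m}_{ij},\fr{m}_{jk},\fr{m}_{ik})$ yields $\lambda_{ij}=\lambda_{jk}=\lambda_{ik}$.

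Finally I would apply Lemma \ref{CombinatorialLemma1} to the collection $\{\lambda_{ij}:0\le i<j\le s\}$, concluding that it is a singleton $\{\lambda\}$ with $\lambda>0$, so $\left.A\right|_{\fr{p}}=\lambda\op{Id}$. The low-index degenerate cases (namely $s=1$ with $n_0>0$, where $\fr{p}=\fr{m}_{01}$, and $s=2$ with $n_0=0$, where $\fr{p}=\fr{m}_{12}$) are resolved immediately by the Schur step alone, with no need for the combinatorial lemma or the g.o. condition. I expect the main subtlety to lie in the first step: it is essential to use the normalizer lemma to enlarge the acting group from $H$ to $\Sp(n_0)\times H$, as otherwise $A$ could a priori mix the $2n_0$ equivalent submodules $\fr{m}_\ell^j$ inside each $\fr{m}_{0j}$ and the diagonal reduction would fail.
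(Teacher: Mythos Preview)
Your proposal is correct and follows essentially the same route as the paper: enlarge the symmetry via the normalizer lemma to $\Sp(n_0)\times H$, deduce that each $\fr{m}_{ij}$ is an irreducible, pairwise inequivalent submodule so that $A|_{\fr{m}_{ij}}=\lambda_{ij}\op{Id}$, then use the bracket relation $[\fr{m}_{ij},\fr{m}_{jk}]\subseteq\fr{m}_{ik}$ with Lemma \ref{EigenEq}(2) and Lemma \ref{CombinatorialLemma1} to equate all $\lambda_{ij}$. The only cosmetic differences are that the paper argues inequivalence via Lemma \ref{EquivalentLemma} and relation \eqref{temp2} rather than by identifying the tensor representations, and it deduces $[\fr{m}_{ij},\fr{m}_{jl}]=\fr{m}_{il}$ from irreducibility rather than exhibiting the explicit bracket $[e_{ab},e_{bc}]=e_{ac}$ as you do; your explicit computation is in fact slightly more careful on the non-vanishing point.
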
 
\begin{proof} 
 By the normalizer Lemma \ref{NormalizerLemma} the metric endomorphism $A$ is $\op{Ad}(N_G(H^0))$-equivariant.  As a result, $A$ is $\op{ad}(\fr{n}_{\fr{g}}(\fr{h}))$-equivariant, where $\fr{n}_{\fr{g}}(\fr{h})=\{Y\in \fr{g}: [Y,\fr{h}]\subseteq \fr{h}\}$ is the Lie algebra of $N_G(H^0)$. In our case $\fr{h}=\fr{sp}(n_1)\oplus \cdots \oplus \fr{sp}(n_s)$ and 
\begin{equation*}
\fr{n}_{\fr{g}}(\fr{h})=\fr{sp}(n_0)\oplus \fr{sp}(n_1)\oplus \cdots \oplus \fr{sp}(n_s).\end{equation*}   

\noindent By taking into account Lemma \ref{rel2} and the expressions of the subspaces $\fr{m}_{0j},\fr{m}_{ij}, \fr{sp}(n_j)$, $j=0,1,\dots,s$ in terms of the basis $\mathcal{B}$, we deduce that the submodules $\fr{m}_{0j},\fr{m}_{ij}$ are $\op{ad}(\fr{n}_{\fr{g}}(\fr{h}))$-invariant and $\op{ad}(\fr{n}_{\fr{g}}(\fr{h}))$-irreducible.    Apart from being $\op{ad}(\fr{n}_{\fr{g}}(\fr{h}))$-irreducible, the $\op{Ad}(N_G(H^0))$-submodules $\fr{m}_{ij}$, $0\leq i<j\leq s$, are pairwise inequivalent. To see this, firstly observe that if $\fr{m}_{ij}$ and $\fr{m}_{lm}$, with $0\leq i<j\leq s$ and $0\leq l<m\leq s$, are two distinct $\op{Ad}(N_G(H^0))$-submodules then there exists at least one index $i_0$ such that 
either\\
 
 \noindent
 1) $i_0=i$ or $i_0=j$ and $i_0\neq l,m$,\ \ \ or \\
  2) $i_0=l$ or $i_0=m$ and $i_0\neq i,j$.\\

 For case 1), taking into account Equation \eqref{temp2} we obtain  $[\fr{sp}(n_{i_0}),\fr{m}_{ij}]=\fr{m}_{ij}$ and $[\fr{sp}(n_{i_0}),\fr{m}_{lm}]=\{0\}$.  For case 2), Equation \eqref{temp2} yields $[\fr{sp}(n_{i_0}),\fr{m}_{ij}]=\{0\}$ and $[\fr{sp}(n_{i_0}),\fr{m}_{lm}]=\fr{m}_{lm}$.  By virtue of Lemma \ref{EquivalentLemma} we deduce that the submodules $\fr{m}_{ij}$ and $\fr{m}_{lm}$ are inequivalent in both cases.
 
  On the other hand, and in view of Remark \ref{conclusion}, the restriction $\left.A\right|_{\fr{p}}=\left.A\right|_{\bigoplus_{0\leq i<j\leq s}\fr{m}_{ij}}$ defines a g.o. metric on $G/N_G(H^0)$.  Since the submodules $\fr{m}_{ij}$ are $\op{Ad}(N_G(H^0))$-irreducible and pairwise inequivalent, we have
\begin{equation}\label{Condition3}\left.A\right|_{\fr{m}_{ij}}=\lambda_{ij}\op{Id},  \ \ 0\leq i<j<l\leq s.\end{equation}

\noindent Also, Equation (\ref{Temp1}) yields
\begin{equation*}\label{SubmoduleBrackets2}
[\fr{m}_{ij},\fr{m}_{jl}]\subseteq \fr{m}_{il} \ \ \makebox{for all} \ \ 0\leq i<j<l\leq s.
\end{equation*}

\noindent In particular, since $[\fr{m}_{ij},\fr{m}_{jl}]$ is an $\op{Ad}(N_G(H^0))$-submodule and $\fr{m}_{il}$ is irreducible, we have $[\fr{m}_{ij},\fr{m}_{jl}]=\fr{m}_{il}$.  Due to the fact that $\left.A\right|_{\fr{p}}$ defines a g.o. metric on $G/N_G(H^0)$, along with Equation \eqref{Condition3}, the $\op{ad}({\fr{h}})$-invariance of $\fr{m}_{ij},\fr{m}_{jl}$ and their $B$-orthogonality, part \textbf{2.} of Lemma \ref{EigenEq} yields 
\begin{equation*}\label{EqEigen1}\lambda_{ij}=\lambda_{jl}=\lambda_{il} \ \ \makebox{for all} \ \ 0\leq i<j< l\leq s.\end{equation*}

 \noindent By Lemma \ref{CombinatorialLemma1}, we deduce that the set $R_s=\{\lambda_{ij}: \ 0\leq i<j\leq s\}$ of the eigenvalues of $\left.A\right|_{\fr{p}}$ contains only one element, and thus $\left.A\right|_{\fr{p}}=\lambda\op{Id}$.
 \end{proof}

  Now assume that $0<n_0\neq 1$.  The algebra $\fr{n}=\fr{sp}(n_0)$ coincides with the Lie algebra of $N_G(H^0)/H^0$.  By Lemma \ref{DualNormalizer}, $\left.A\right|_{\fr{n}}$ defines a bi-invariant metric on $N_G(H^0)/H^0$, which in turn corresponds to an $\op{Ad}$-invariant inner product on $\fr{sp}(n_0)$.  Note that $\fr{sp}(n_0)$ is simple and the only $\op{Ad}$-invariant inner product is a scalar multiple of the Killing form.  Therefore, 
  \begin{equation}\label{mueigenv}\left.A\right|_{\fr{n}}=\mu\op{Id}, \ \ \mu>0.\end{equation}

Next, we have the following:
\begin{prop}\label{Simplif2sp}
We assume that $n_0>1$. Then $\lambda=\mu$, where $\lambda$ is given by Proposition \ref{Simplif1sp} and $\mu$ is given by Equation \eqref{mueigenv}.
\end{prop}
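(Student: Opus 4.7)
The plan is to apply part \textbf{2} of Lemma~\ref{EigenEq} to three carefully chosen subspaces of $\fr{m}$: the Lie algebra $\fr{n} = \fr{sp}(n_0)$ itself, together with two of the equivalent irreducible summands $\fr{m}^j_1$ and $\fr{m}^j_2$ appearing in the decomposition $\fr{m}_{0j} = \bigoplus_{\ell=1}^{2n_0} \fr{m}^j_\ell$. Here $j \in \{1,\dots,s\}$ may be any fixed index (such a $j$ exists since $H$ is non-trivial, otherwise the proposition is vacuous).

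First I would verify the hypotheses of Lemma~\ref{EigenEq} part \textbf{2}. Pairwise $B$-orthogonality is immediate from the matrix descriptions given in Section~\ref{Isotropy2}: $\fr{n}$ sits inside the upper-left $n_0 \times n_0$ block, whereas $\fr{m}^j_1$ (respectively $\fr{m}^j_2$) is spanned by vectors $e_{1b}, f_{1b}, g_{1b}, h_{1b}$ (respectively $e_{2b}, f_{2b}, g_{2b}, h_{2b}$) with $b$ ranging over the $j$-th block. Invariance under $\op{ad}(\fr{h})$ is equally transparent: $\fr{n}$ commutes with $\fr{h} = \fr{sp}(n_1) \oplus \cdots \oplus \fr{sp}(n_s)$ because the two subalgebras occupy disjoint diagonal blocks, while each $\fr{m}^j_\ell$ was introduced precisely as an $\op{Ad}(H)$-irreducible submodule. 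The scalar form of $A$ on each of the three subspaces follows from Proposition~\ref{Simplif1sp} (giving $\left.A\right|_{\fr{m}^j_1} = \left.A\right|_{\fr{m}^j_2} = \lambda \op{Id}$) and Equation~\eqref{mueigenv} (giving $\left.A\right|_{\fr{n}} = \mu \op{Id}$).

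The key remaining step is to exhibit a non-zero projection of $[\fr{n}, \fr{m}^j_1]$ onto $\fr{m}^j_2$. Since $n_0 > 1$, the vector $e_{12}$ belongs to $\fr{n}$; for any index $b$ with $n_0 + n_1 + \cdots + n_{j-1} + 1 \le b \le n_0 + n_1 + \cdots + n_j$ we have $e_{1b} \in \fr{m}^j_1$. A direct application of Lemma~\ref{rel2} gives $[e_{12}, e_{1b}] = -e_{2b}$, which is a non-zero element of $\fr{m}^j_2$. Invoking Lemma~\ref{EigenEq} part \textbf{2} then forces $\mu = \lambda$, as required.

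The real substance of the argument is the choice of the three subspaces; once made, everything reduces to routine bookkeeping with the basis $\mathcal{B}$. The explicit role of the hypothesis $n_0 > 1$ is precisely to ensure that $\fr{n}$ contains a bracket-generating element (such as $e_{12}$) that moves vectors between two distinct equivalent summands of $\fr{m}_{0j}$; if $n_0 = 1$ the algebra $\fr{sp}(1)$ would act trivially on the index separating $\fr{m}^j_1$ from $\fr{m}^j_2$, and no such third space would be available, which is why this case must be treated separately.
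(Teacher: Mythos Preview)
Your proof is correct and essentially identical to the paper's: both choose $e_{12}\in\fr{n}$ (available precisely because $n_0>1$) and compute $[e_{12},e_{1b}]=-e_{2b}\in\fr{m}^j_2$ via Lemma~\ref{rel2}. The only cosmetic difference is that the paper invokes part~\textbf{1} of Lemma~\ref{EigenEq} (observing that $-e_{2b}$ lies in $(\fr{n}\oplus\fr{m}^j_1)^{\bot}$) whereas you invoke part~\textbf{2} (naming $\fr{m}^j_2$ explicitly as the third subspace); the underlying computation and conclusion are the same.
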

\begin{proof}

Recall the $\op{Ad}(G/H)$-submodules $\fr{m}_l^j\subseteq \fr{m}_{0j}\subset \fr{p}$ defined in Section \ref{Isotropy2}. By Proposition \ref{Simplif1sp}, we have
\begin{equation*}\label{Res2sp}\left.A\right|_{\fr{m}_j^i}=\lambda\op{Id}.
\end{equation*}

\noindent We choose the vectors $e_{12}\in \fr{n}$ and $e_{1k+1}\in \fr{m}^1_1$. Since $n_0>1$, the above vectors do not coincide, hence  by Lemma \ref{rel2} it follows that
\begin{equation*}
[e_{12},e_{1k+1}]=-e_{2k+1}\in\fr{m}^1_2.
\end{equation*}

\noindent Therefore, $[\fr{n},\fr{m}_1^1]$ has non zero projection on $(\fr{n}\oplus \fr{m}_1^1)^{\bot}$.  Along with the $\op{ad}(\fr{h})$-invariance of $\fr{n}$ and $\fr{m}_1^1$ and the facts that $\left.A\right|_{\fr{m}_j^i}=\lambda\op{Id}$ and $\left.A\right|_{\fr{n}}=\mu\op{Id}$, part \textbf{1.} of Lemma \ref{EigenEq} yields $\lambda=\mu$.   
\end{proof}

  Finally, we consider the case $n_0=1$.
  
  \begin{prop}\label{Simplif3sp} Let $n_0=1$ and $s>1$. Then $\lambda=\mu$, where $\lambda$ is given by Proposition \ref{Simplif1sp} and $\mu$ is given by Equation \eqref{mueigenv}.
  \end{prop}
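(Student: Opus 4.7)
The plan is to apply the geodesic orbit condition (Proposition~\ref{GOCond}) to a single carefully chosen element $X \in \fr{m}$ whose non-trivial interactions exploit the module $\fr{m}_{12}$, which exists precisely because $s > 1$. Fix indices $b_1 \in \{2, \dots, n_1+1\} \subseteq I_1$ and $b_2 \in \{n_1+2, \dots, n_1 + n_2 + 1\} \subseteq I_2$ and consider
\[
X = g_{11} + e_{1 b_1} + e_{1 b_2} + f_{b_1 b_2} \in \fr{n} \oplus \fr{m}_{01} \oplus \fr{m}_{02} \oplus \fr{m}_{12}.
\]
Proposition~\ref{Simplif1sp} and equation~\eqref{mueigenv} yield $AX = \mu g_{11} + \lambda(e_{1 b_1} + e_{1 b_2} + f_{b_1 b_2})$. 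Writing a putative $a \in \fr{h}$ as $a = \sum_{j=1}^{s} a_j$ with $a_j \in \fr{sp}(n_j)$, I would expand $[a+X, AX]=0$ using that $\fr{h}$ commutes with $\fr{n}$ (so $[a, g_{11}] = 0$), that $[g_{11}, f_{b_1 b_2}] = 0$ by disjointness of index supports, and that all cross-brackets among the $\fr{p}$-components cancel pairwise because their eigenvalues all equal $\lambda$.

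Projecting onto the $B$-orthogonal summands $\fr{m}_{01}, \fr{m}_{02}, \fr{m}_{12}$ yields the system
\begin{align*}
\lambda[a_1, e_{1 b_1}] + (\lambda - \mu)[g_{11}, e_{1 b_1}] &= 0,\\
\lambda[a_2, e_{1 b_2}] + (\lambda - \mu)[g_{11}, e_{1 b_2}] &= 0,\\
\lambda[a_1 + a_2, f_{b_1 b_2}] &= 0.
\end{align*}
By direct bracket computation one obtains $[g_{11}, e_{1 b_j}] = 2 g_{1 b_j}$ and $[g_{b_j b_j}, e_{1 b_j}] = -2 g_{1 b_j}$, so the first two equations force $a_j = -\tfrac{\mu - \lambda}{\lambda}\, g_{b_j b_j} + \sigma_j$, where $\sigma_j$ lies in the stabiliser of $e_{1 b_j}$ in $\fr{sp}(n_j)$, a copy of $\fr{sp}(n_j-1)$ supported on the indices $I_j \setminus \{b_j\}$. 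Since these indices are disjoint from $\{b_1, b_2\}$, one has $[\sigma_j, f_{b_1 b_2}] = 0$, and combined with the identity $[g_{b_j b_j}, f_{b_1 b_2}] = -2 h_{b_1 b_2}$ (for each $j = 1, 2$) the third equation collapses to
\[
4\,\frac{\mu - \lambda}{\lambda}\, h_{b_1 b_2} = 0,
\]
which forces $\mu = \lambda$.

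The main technical obstacle is verifying the brackets involving the diagonal basis elements $g_{aa}$, for which Lemma~\ref{rel2} (stated for strictly off-diagonal indices) does not apply directly; these must be checked by either direct matrix multiplication or by translating to the quaternionic picture (in which $g_{aa}$ acts as left-multiplication by the imaginary quaternion $j$, while $f_{b_1 b_2}$ represents the imaginary unit $i$). The conceptual reason the argument requires $s > 1$ is the existence of $\fr{m}_{12}$, which supplies the third constraint that is absent in the $\operatorname{Sp}(n)/\operatorname{Sp}(n-1)$ case. It is equally essential to pair $Y$ and $Z_{12}$ from different quaternion directions: had one chosen $Y = f_{11}$ (matching the $i$-color of $f_{b_1 b_2}$), the analogous brackets $[f_{b_j b_j}, f_{b_1 b_2}]$ would carry opposite signs for $j = 1, 2$, and the $\fr{m}_{12}$ constraint would be vacuous.
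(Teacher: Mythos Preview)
Your proof is correct and complete; the only slip is a harmless sign: a direct matrix computation gives $[g_{b_j b_j},f_{b_1 b_2}]=+2h_{b_1 b_2}$ rather than $-2h_{b_1 b_2}$, but since the two contributions add (not cancel), your final equation still reads $4(\mu-\lambda)\lambda^{-1}h_{b_1 b_2}=0$ and forces $\mu=\lambda$. Your identification of the stabiliser of $e_{1b_j}$ in $\fr{sp}(n_j)$ with the copy of $\fr{sp}(n_j-1)$ supported on $I_j\setminus\{b_j\}$ is right (the inclusion is clear, and equality follows from $\dim\Sp(n_j)-\dim S^{4n_j-1}=\dim\Sp(n_j-1)$), and this is exactly what makes the $\fr{m}_{12}$-equation collapse to a scalar multiple of $h_{b_1 b_2}$.

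The approach, however, is genuinely different from the paper's. The paper works with $X=f_{11}+e_{12}+X_{12}$ for \emph{arbitrary} $X_{12}\in\fr{m}_{12}$, obtains $[a,X_{12}]=0$, and then invokes Lemma~\ref{finalemma} (the ideal argument in $\fr{sp}(n_1)\oplus\fr{sp}(n_2)$) to force $\pi_1(a)=\pi_2(a)=0$, whence $[a,e_{12}]=0$ and $\lambda=\mu$. You instead fix a \emph{single} vector with components in $\fr{n}\oplus\fr{m}_{01}\oplus\fr{m}_{02}\oplus\fr{m}_{12}$ and solve explicitly for $a_1,a_2$. This avoids Lemma~\ref{finalemma} entirely and does not rely on varying $X_{12}$, but it costs you the direct bracket verifications with diagonal elements $g_{aa}$ outside the scope of Lemma~\ref{rel2}, and it only works because you pair the quaternionic ``colours'' correctly: with $Y=f_{11}$ (the paper's choice) and $Z_{12}=f_{b_1 b_2}$ the $\fr{m}_{12}$-constraint would indeed vanish, as you observe, which is precisely why the paper needs the freedom of arbitrary $X_{12}$ while you do not. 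Both arguments exploit $s>1$ through the presence of $\fr{m}_{12}$; yours is more self-contained and explicit, the paper's more structural.
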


 The proof  requires the following.

\begin{lemma}\label{finalemma}
Let $\fr{m}_{ij}$, $i>0$, be one of the submodules defined in Section \ref{Isotropy2}.  Let $a\in \fr{h}=\fr{sp}(n_1)\oplus \cdots \oplus \fr{sp}(n_s)$ such that $[a,\fr{m}_{ij}]=\{0\}$.  Then the projection of $a$ on $\fr{sp}(n_i)\oplus \fr{sp}(n_j)$ is zero, i.e. $a\in \fr{sp}(n_1)\oplus \cdots \oplus \fr{sp}(n_{i-1})\oplus \fr{sp}(n_{i+1})\oplus \cdots \oplus \fr{sp}(n_{j-1})\oplus \fr{sp}(n_{j+1})\oplus \cdots \oplus \fr{sp}(n_s)$.
\end{lemma}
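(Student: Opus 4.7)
The plan is to decompose $a$ along $\fr{h} = \fr{sp}(n_1)\oplus \cdots \oplus \fr{sp}(n_s)$, use relation (\ref{temp2}) to reduce the hypothesis to the $\fr{sp}(n_i)$- and $\fr{sp}(n_j)$-components, and then exploit that the induced action on $\fr{m}_{ij}$ is, after complexification, the tensor product of the two standard representations and hence faithful.

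Write $a = a_1 + \cdots + a_s$ with $a_k \in \fr{sp}(n_k)$. By relation (\ref{temp2}), $[a_k, \fr{m}_{ij}] = \{0\}$ for every $k \in \{1,\dots,s\} \setminus \{i,j\}$, so the hypothesis collapses to $[a_i + a_j, \fr{m}_{ij}] = \{0\}$; equivalently, the pair $(a_i, a_j)$ lies in the kernel of the representation $\rho \colon \fr{sp}(n_i) \oplus \fr{sp}(n_j) \to \op{End}(\fr{m}_{ij})$ induced by the Lie bracket. It thus suffices to show that $\rho$ is faithful. For this I would pass to the complexification. From the isotropy computation in Section \ref{Isotropy2}, $\fr{m}_{ij} \otimes \mathbb{C} \cong \varphi_i \otimes \varphi_j$, i.e.\ $V_i \otimes_{\mathbb{C}} V_j$ where $V_k = \mathbb{C}^{2n_k}$ is the standard representation of $\fr{sp}(n_k) \otimes \mathbb{C}$. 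Under this identification, the bracket action of $(a_i, a_j)$ on $\fr{m}_{ij} \otimes \mathbb{C}$ becomes the diagonal tensor action $a_i \otimes \op{Id}_{V_j} + \op{Id}_{V_i} \otimes a_j$.

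The main step is then to show that this diagonal tensor action is faithful. Setting $a_i \otimes \op{Id}_{V_j} + \op{Id}_{V_i} \otimes a_j = 0$ and evaluating on pure tensors $v \otimes w$ forces $a_i$ to act on $V_i$ as a scalar $\lambda\, \op{Id}_{V_i}$ and $a_j$ to act on $V_j$ as $-\lambda\, \op{Id}_{V_j}$. Since $\fr{sp}(n_k) \otimes \mathbb{C}$ is simple and its standard representation is faithful and irreducible, the only element acting as a scalar in this representation is $0$; hence $\lambda = 0$ and $a_i = a_j = 0$, which is the claim. This tensor-product faithfulness is the one nontrivial ingredient of the argument; it could equally well be verified by a brute-force computation using Lemma \ref{rel2} and the explicit basis of $\fr{m}_{ij}$, but the tensor-product viewpoint is considerably cleaner and sidesteps the case analysis over the four types of basis vectors $e_{ab}, f_{ab}, g_{ab}, h_{ab}$.
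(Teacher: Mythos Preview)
Your argument is correct. The reduction to $[a_i+a_j,\fr{m}_{ij}]=0$ via relation \eqref{temp2} matches the paper, but from there the two proofs diverge. The paper observes that the centralizer
\[
c_{ij}:=\{X\in \fr{sp}(n_i)\oplus\fr{sp}(n_j):[X,\fr{m}_{ij}]=0\}
\]
is an ideal of $\fr{sp}(n_i)\oplus\fr{sp}(n_j)$ (by Jacobi and the $\op{ad}(\fr{h}_{ij})$-invariance of $\fr{m}_{ij}$), hence equals $\{0\}$, $\fr{sp}(n_i)$, $\fr{sp}(n_j)$, or the whole sum; relation \eqref{temp2} then rules out the three nonzero options in one stroke. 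Your route instead invokes the explicit tensor structure $\fr{m}_{ij}\otimes\mathbb{C}\cong V_i\otimes V_j$ from Section \ref{Isotropy2} and checks faithfulness directly via the scalar argument. The paper's ideal-theoretic trick is shorter and uses only that each simple factor acts nontrivially---it never needs to know \emph{which} representation $\fr{m}_{ij}$ is---whereas your representation-theoretic argument is more concrete and makes transparent \emph{why} the kernel vanishes. Both are clean; the ideal argument is worth noting because it generalizes immediately to any semisimple $\fr{h}_{ij}$ and any module on which every simple ideal acts nontrivially.
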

 \begin{proof}  Let $\pi_{j}(a)$ be the projection of $a$ on $\fr{sp}(n_j)$ and set $\fr{h}_{ij}:=\fr{sp}(n_i)\oplus \fr{sp}(n_j)$.  By relation \eqref{temp2}, we deduce that $[\fr{h},\fr{m}_{ij}]=[\fr{h}_{ij},\fr{m}_{ij}]=\fr{m}_{ij}$.  Then we have that $\{0\}=[a,\fr{m}_{ij}]=[(\pi_{i}+\pi_j)(a),\fr{m}_{ij}]$, therefore $(\pi_{i}+\pi_j)(a)$ lies in the space 
 \begin{equation*}c_{ij}:=\{X\in \fr{h}_{ij}:[X,Y]=0 \ \ \makebox{for all} \ \ Y\in \fr{m}_{ij}\}.\end{equation*}
 
 \noindent Using the Jacobi identity and the $\op{ad}(\fr{h}_{ij})$-invariance of $\fr{m}_{ij}$, it is not hard to verify that the space $c_{ij}$ is an ideal of the Lie algebra $\fr{h}_{ij}$.  Since $\fr{h}_{ij}$ is semisimple, $c_{ij}$ is necessarily one of the ideals $\{0\}$, $\fr{sp}(n_i)$, $\fr{sp}(n_j)$ or $\fr{h}_{ij}$.  On the other hand, relation \eqref{temp2} implies that $[\fr{sp}(n_i),\fr{m}_{ij}]\neq \{0\}$, $[\fr{sp}(n_j),\fr{m}_{ij}]\neq \{0\}$ and $[\fr{sp}(n_i)\oplus \fr{sp}(n_j),\fr{m}_{ij}]\neq \{0\}$.  We conclude that $c_{ij}=\{0\}$ and thus $(\pi_{i}+\pi_j)(a)=0$. \end{proof}

\noindent \emph{Proof of Proposition \ref{Simplif3sp}}. In view of Remark \ref{conclusion}, Proposition \ref{Simplif1sp} and Equation \eqref{mueigenv}, the g.o. metric on $G/H$ has the form 
\begin{equation}\label{g.o.form3}A=\begin{pmatrix}\left.\mu \op{Id}\right|_{\fr{n}}& 0& \\ 0& \left.\lambda\op{Id}\right|_{\fr{p}}\end{pmatrix}.
\end{equation}

\noindent Let $X\in \fr{m}=\fr{n}\oplus \fr{p}$ and write $X=X_{\fr{n}}+X_{\fr{p}}$, where $X_{\fr{n}}$ is the projection of $X$ on $\fr{n}$ and $X_{\fr{p}}$ is the projection of $X$ on $\fr{p}$.  Further, we write 
\begin{equation}\label{Xp}X_{\fr{p}}=\sum_{0\leq i<j\leq s}X_{ij},\end{equation} 

\noindent where $X_{ij}$ denotes the projection of $X$ on the space $\fr{m}_{ij}$.  Taking into account the definition of the spaces $\fr{n}$, $\fr{m}_{ij}$ in terms of the basis $\mathcal{B}$, along with Lemma \ref{rel2}, we deduce that
\begin{equation}\label{nrel}[\fr{n},\fr{m}_{ij}]\subseteq \left\{ \begin{array}{ll}  \fr{m}_{ij},\ \ \mbox{if}\ \ i=0 \\
\left\{ {0} \right\}, \quad \mbox{otherwise}.
\end{array}
\right.
\end{equation}

\noindent Since $A$ is a g.o. metric, Proposition \ref{GOCond} along with expression (\ref{g.o.form3}) of $A$, implies that there exists a vector $a\in \fr{h}=\fr{sp}(n_1)\oplus \cdots \oplus \fr{sp}(n_s)$ such that 
\begin{equation*}0=[a+X_{\fr{n}}+X_{\fr{p}}, \mu X_{\fr{n}}+\lambda X_{\fr{p}}]=\mu[a,X_{\fr{n}}]+\lambda [a,X_{\fr{p}}]+(\lambda-\mu)[X_{\fr{n}},X_{\fr{p}}].\end{equation*}

Taking into account Equation \eqref{Xp} along with relation \eqref{nrel} and the fact that $[\fr{n},\fr{h}]=\{0\}$ (recall that $\fr{n}=\fr{sp}(n_0)$), the above equation is equivalent to 
\begin{equation}\label{eqgo}0=\lambda\sum_{0\leq i<j\leq s}[a,X_{ij}]+(\lambda-\mu)\sum_{j=1}^s[X_{\fr{n}},X_{0j}].
\end{equation}

\noindent We choose $X_{\fr{n}}:=f_{11}$, $X_{01}:=e_{12}$ and $X_{\fr{p}}:=X_{01}+X_{12}=e_{12}+X_{12}\in \fr{m}_{01}+\fr{m}_{12}$ (since $s>1$, $\fr{m}_{12}\neq \{0\}$), where we consider $X_{12}$ as arbitrary. Then Equation \eqref{eqgo} along with Lemma \ref{rel2} yield 
\begin{equation}\label{eqgo1}0=\lambda[a,e_{12}]+\lambda [a, X_{12}]+2(\lambda-\mu)f_{21}.\end{equation}

  \noindent By the $\op{ad}(\fr{h})$-invariance of $\fr{m}_{ij}$ and the fact that $a\in \fr{h}$, the first two terms of Equation \eqref{eqgo1} lie in $\fr{m}_{01}$ and $\fr{m}_{12}$ respectively, while the last term lies in $\fr{m}_{01}$. Therefore, Equation \eqref{eqgo1} yields the system
  \begin{eqnarray} \lambda[a,e_{12}]+ 2(\lambda-\mu)f_{21}&=&0 \label{e1}\\
   \lambda [a, X_{12}]&=&0.\label{e2}
   \end{eqnarray}
   
   \noindent Since $X_{12}\in \fr{m}_{12}$ is arbitrary, Equation \eqref{e2} implies that $[a,\fr{m}_{12}]=\{0\}$.  Lemma \ref{finalemma} then yields $(\pi_{1}+\pi_2)(a)=0$, where $\pi_j$ denotes the projection of $a$ on $\fr{sp}(n_j)$.  Hence $\pi_1(a)=\pi_2(a)=0$.  On the other hand, the fact that $e_{12}\in \fr{m}_{01}$ along with relation \eqref{temp2}, imply that $[a,e_{12}]=[\pi_{1}(a),e_{12}]=0$.  Substituting into Equation \eqref{e1}, we obtain $2(\lambda-\mu)f_{21}=0$ and thus $\lambda=\mu$. \qed

\section{Proof of Theorem \ref{main2}}\label{proof2}
 
We can now combine the results of the previous section to give a proof of Theorem \ref{main2}.

 The standard metric on $G/H=Sp(n)/Sp(n_1)\times \cdots \times \Sp(n_s)$ is a g.o. metric, hence the sufficiency part of the theorem holds trivially. 
  For the necessity part, let $g$ be a $G$-invariant g.o. metric on $G/H$. If $n_0=0$, i.e.  $\fr{n}=\{0\}$, then Proposition \ref{Simplif1sp} implies that $g$ is the standard metric.  

  Now assume that $n_0>0$.  
  If $n_0\ne 1$, then the theorem follows from Remark \ref{conclusion} and Propositions \ref{Simplif1sp} and \ref{Simplif2sp}.
If $n_0=1$ then $\fr{n}=\fr{sp}(1)$ and we consider two cases, $s=1$ and $s>1$. 

If $s=1$, then $G/H=Sp(n)/Sp(n-1)$ and in this case any metric endomorphism $A$ such that $\left.A\right|_{\fr{p}}=\lambda \op{Id}$ and $\left.A\right|_{\fr{n}}=\mu\op{Id}$ is a g.o. metric and vice versa (\cite{Nik0}).   Therefore, the $G$-invariant g.o metrics on $G/H$ are precisely the metrics of the form $A=\begin{pmatrix}\left.\mu \op{Id}\right|_{\fr{n}}& 0& \\ 0& \left.\lambda\op{Id}\right|_{\fr{p}}\end{pmatrix}$.  It follows that the $G$-invariant g.o. metrics on $G/H$ are homothetic to the metrics $g_{\mu}=\begin{pmatrix}\left.\mu \op{Id}\right|_{\fr{n}}& 0& \\ 0& \left.\op{Id}\right|_{\fr{p}}\end{pmatrix}$. This settles Theorem \ref{main2} for the case $n_0=1$ and $s=1$.
Finally, if $n_0=1$ and $s>1$, Theorem \ref{main2} follows from Proposition  \ref{Simplif3sp}.\qed


\end{document}